\documentclass[11pt, a4paper]{amsart}
\usepackage{amsmath,amscd,amssymb, mathtools, enumerate, hyperref}

\usepackage[mathscr]{eucal}

\parskip=2pt

\newtheorem{thm}{Theorem}[section]
\newtheorem{cor}[thm]{Corollary}

\newtheorem{prop}[thm]{Proposition}

\theoremstyle{definition}

\newtheorem{exas}[thm]{Examples}
\newtheorem{exa}[thm]{Example}
\theoremstyle{remark}

\newtheorem{rems}[thm]{Remarks}

\numberwithin{equation}{section}

\def\R{\mathbb{R}}
\def\C{\mathbb{C}}
\def\N{\mathbb{Z}_+}
\def\Z{\mathbb{Z}}
\def\D{\mathbb{D}}
\def\T{\mathbb{T}}

\def\ep{\varepsilon}
\def\l{\lambda}

\def\dd{\,d}
\def\L{\L}
\def\B{\mathcal{B}}
\def\Bes{\mathcal{B}}

\def\Bq{{\Bes_0}}
\def\A{\mathcal{A}}
\def\rit{re^{i\theta}}
\def\E{\mathcal{E}}

\def\L{\mathcal{L}}

\def\W{\mathcal{W}}

\begin{document}

\title[A Katznelson--Tzafriri theorem for analytic Besov functions]{A Katznelson--Tzafriri theorem for analytic Besov functions of operators}

\author{Charles Batty}
\address{St.\ John's College\\
University of Oxford\\
Oxford, OX1 3JP, UK
}

\email{charles.batty@sjc.ox.ac.uk}

\author{David Seifert}
\address{School of Mathematics, Statistics and Physics\\
 Newcastle University\\
 Newcastle upon Tyne, NE1 7RU, UK
}

\email{david.seifert@ncl.ac.uk}

\begin{abstract}
Let $T$ be a power-bounded operator on a Banach space $X$, $\A$ be a Banach algebra of bounded holomorphic functions on the unit disc $\D$, and assume that there is a bounded functional calculus for the operator $T$, so there is a bounded algebra homomorphism mapping functions $f \in \A$ to bounded operators $f(T)$ on $X$.  Theorems of Katznelson--Tzafriri type establish that  $\lim_{n\to\infty} \|T^n f(T)\| = 0$ for functions $f \in \A$ whose boundary functions vanish on the unitary spectrum $\sigma(T)\cap \T$ of $T$, or sometimes satisfy a stronger assumption of spectral synthesis.  We consider the case when $\A$ is the Banach algebra $\B(\D)$ of analytic Besov functions on $\D$.  We prove a Katznelson--Tzafriri theorem for the $\B(\D)$-calculus which extends several previous results.
\end{abstract}

\subjclass[2020]{Primary 47A05, Secondary 47A10, 47A35, 47D03}

\keywords{Katznelson--Tzafriri theorem, analytic Besov function, functional calculus, power-bounded operator, unitary spectrum, resolvent condition}

\date \today

\maketitle

\section{Introduction}

Let $T$ be a power-bounded operator on a complex Banach space $X$, $\D$ be the open unit disc, $\T$ be the unit circle, and $\overline\D := \D \cup \T$.    Let $f \in H^\infty(\D)$, the Banach algebra of bounded holomorphic functions on $\D$ with the supremum norm $\|\cdot\|_\infty$.  Then $f$ has a boundary function defined a.e.\ on the unit circle $\T$ as the radial (or non-tangential) limit of $f$  wherever this exists. 
In certain circumstances it may be possible to define a bounded operator $f(T)$ on $X$.   In particular this is possible if $f$ is in the Banach algebra $W^+(\D)$ of all  functions  of the form $f(z) = \sum_{n=0}^\infty a_n z^n$ for $z \in \overline{\D}$, where $\|f\|_W := \sum_{n=0}^\infty |a_n| < \infty$, and $f(T)$ is defined to be $\sum_{n=0}^\infty a_n T^n$.     Then the map $f \mapsto f(T)$ is a bounded algebra homomorphism from $W^+(\D)$ to the Banach algebra $L(X)$ of all bounded linear operators on $X$, mapping the constant function $1$ to the identity operator $I$ and the identity function $z$ to $T$, that is, a bounded $W^+(\D)$-calculus for $T$.   The boundary functions of the functions in $W^+(\D)$ form a closed subalgebra of  the Wiener algebra $W(\T)$  of all functions on $\T$ of the form $f(z) = \sum_{n=-\infty}^\infty a_n z^n$, where $\|f\|_W := \sum_{n=-\infty}^\infty |a_n| < \infty$.

One may hope to define $f(T)$ for all $f$ in the disc algebra $A(\D)$ of all continuous functions on $\overline{\D}$ which are holomorphic on $\D$.   However this is possible if and only if $T$ is polynomially bounded, that is, there is a constant $C$ such that 
\begin{equation*} \label{dpdef}
\|p(T)\| \le C \|p\|_\infty
\end{equation*}
for all polynomials $p$.   When this holds, the map $p \mapsto p(T)$ extends to $A(\D)$ by continuity, since the polynomials are dense in $A(\D)$, and this map is then an $A(\D)$-calculus for $T$.

The term ``Katznelson--Tzafriri theorem'' may be applied to various results about convergence to zero of sequences of the form $(T^nf(T))_{n\ge1}$, where $T$ is a power-bounded operator, $f \in H^\infty(\D)$  and (the boundary function of) $f$ vanishes on the unitary spectrum $\sigma_u(T) := \sigma(T) \cap \T$ of $T$.   In most cases, $f \in A(\D)$ and then $\sigma_u(T)$ is of measure  zero, unless $f$ is the zero function.   The fundamental version, proved by Katznelson and Tzafriri in~\cite[Theorem~5]{KT86}, is as follows.    

 \begin{thm}[Katznelson, Tzafriri] \label{KTthm}
Let $T$ be a power-bounded operator on a Banach space, let $f \in W^+(\D)$, and assume that $f$ is of spectral synthesis in $W(\T)$ with respect to $\sigma_u(T)$.   Then $\lim_{n\to\infty} \|T^n f(T)\| = 0$.
\end{thm}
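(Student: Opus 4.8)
The plan is to localise the problem to the unitary spectrum by an approximation argument and then to produce the decay through a resolvent representation combined with oscillatory cancellation. Write $M := \sup_{n\ge0}\|T^n\|$ and $E := \sigma_u(T)$. The basic quantitative input is the uniform bound $\|\phi(T)\|\le M\|\phi\|_W$ for $\phi\in W^+(\D)$, which gives, for any $f,g\in W^+(\D)$,
\[
\|T^nf(T) - T^ng(T)\| = \|(z^n(f-g))(T)\| \le M\|z^n(f-g)\|_W = M\|f-g\|_W .
\]
Hence $f\mapsto\limsup_n\|T^nf(T)\|$ is $M$-Lipschitz on $W^+(\D)$, so the set $I_0 := \{f\in W^+(\D):\lim_n\|T^nf(T)\|=0\}$ is closed; it is moreover an ideal, since $(uf)(T)=u(T)f(T)$ yields $\|T^n(uf)(T)\|\le\|u(T)\|\,\|T^nf(T)\|$ for $u\in W^+(\D)$.

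The synthesis hypothesis means that for every $\varepsilon>0$ there is $g\in W(\T)$ vanishing on a neighbourhood of $E$ with $\|f-g\|_W<\varepsilon$. Such a $g$ generally lies in $W(\T)\setminus W^+(\D)$, so it has no direct meaning under the calculus; the device for bringing it into play is to truncate the low negative frequencies and shift. Writing $g=\sum_{k\in\Z}b_kz^k$ and $g^{(n)}:=\sum_{k\ge -n}b_kz^k$, the function $z^ng^{(n)}=\sum_{k\ge -n}b_kz^{n+k}$ lies in $W^+(\D)$ for every $n$, and $\|f-g^{(n)}\|_W\le\varepsilon+\sum_{k<-n}|b_k|=\varepsilon+o(1)$. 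Since $z^nf$ and $z^ng^{(n)}$ both lie in $W^+(\D)$, the Lipschitz bound gives $\|T^nf(T)-(z^ng^{(n)})(T)\|\le M\|f-g^{(n)}\|_W$, whence $\limsup_n\|T^nf(T)\|\le M\varepsilon+\limsup_n\|(z^ng^{(n)})(T)\|$. As $\varepsilon$ is arbitrary, the theorem reduces to the decay statement $\|(z^ng^{(n)})(T)\|\to0$, in which the relevant boundary function now essentially vanishes near $E$.

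To prove this decay I would pass to a resolvent integral and exploit the oscillation of $\lambda^n$. For $0<s<1$ the dilate $h_s(z):=h(sz)$ of any $h\in W^+(\D)$ is holomorphic on $\{|z|<1/s\}\supseteq\overline\D$, so with $R(\lambda):=(\lambda-T)^{-1}$ the Riesz--Dunford formula gives $h_s(T)=\frac{1}{2\pi i}\int_{|\lambda|=\rho}h_s(\lambda)R(\lambda)\,d\lambda$ for any $1<\rho<1/s$, while $\|h-h_s\|_W=\sum_k|a_k|(1-s^k)\to0$, so $h_s(T)\to h(T)$ in $L(X)$ as $s\to1$. Applying this with $h=z^ng^{(n)}$ and letting $s,\rho\to1$ localises the decay to the boundary integral of $\lambda^n$ against the operator-valued function $\lambda\mapsto g^{(n)}(\lambda)R(\lambda)$ on $\T$. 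On any compact arc away from $E$ the resolvent is continuous up to $\T$, so $g^{(n)}R$ is continuous there and the factor $\lambda^n=e^{in\theta}$ forces that part of the integral to $0$ by the operator-valued Riemann--Lebesgue lemma. Near $E$ the resolvent blows up like $\|R(\lambda)\|\le M/(|\lambda|-1)$, but the boundary function $g$, and hence $g^{(n)}$ up to the $o(1)$ tail, vanishes there, and the aim is to use this vanishing to absorb the resolvent singularity.

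The principal obstacle is exactly this last balance. Organising the two limits $s\to1$ and $\rho\to1^+$ together with $n\to\infty$, one must ensure that the vanishing of the approximant near $E$ genuinely dominates the resolvent growth rather than merely cancelling it to an $O(1)$ quantity, since a crude estimate of $|g(\lambda)|\lesssim(|\lambda|-1)$ against $\|R(\lambda)\|\lesssim(|\lambda|-1)^{-1}$ leaves a bounded but not small remainder. I expect the resolution to require a quantitative link between the modulus of continuity of $g$ at $E$ and the admissible rate of resolvent blow-up (a non-quasianalyticity type input), after which the synthesis hypothesis can be realised by approximants vanishing strongly enough near $E$ to drive the boundary contribution to zero and close the argument.
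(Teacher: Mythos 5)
Your proposal should first be set against the fact that the paper does not actually prove Theorem~\ref{KTthm}: it is quoted from \cite[Theorem 5]{KT86}, so the only internal point of comparison is the machinery the paper develops for its own Theorem~\ref{KTB}. The first half of your argument is correct and worthwhile: the Lipschitz bound $\|T^nf(T)-T^ng(T)\|\le M\|f-g\|_W$, the truncation $g^{(n)}=\sum_{k\ge -n}b_kz^k$ with $z^ng^{(n)}\in W^+(\D)$, and the resulting reduction of the theorem to the single statement that $\|(z^ng^{(n)})(T)\|\to 0$ whenever $g\in W(\T)$ vanishes on a neighbourhood of $E=\sigma_u(T)$. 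That is exactly what the synthesis hypothesis is designed to buy, and this part would survive in any complete proof.

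The gap is that this remaining statement \emph{is} the Katznelson--Tzafriri theorem, and the route you sketch for it cannot be completed in the stated generality. The step ``letting $s,\rho\to1$ localises the decay to the boundary integral on $\T$'' is not valid: the resolvent has no boundary values at points of $E$, and, more fatally, for $\lambda\in\T\setminus\sigma(T)$ near $E$ power-boundedness gives no upper bound at all on $\|(\lambda I-T)^{-1}\|$ --- the estimate $M/(|\lambda|-1)$ holds only for $|\lambda|>1$, and on the circle the resolvent of a power-bounded operator can blow up arbitrarily fast as $\lambda$ approaches $\sigma_u(T)$ (this is precisely why rate results such as \cite{Sei16} must be formulated for arbitrary resolvent growth functions). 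So the ``non-quasianalyticity type input'' you hope for does not exist here, and synthesis gives no quantitative control on how fast approximants vanish near $E$ in any case. There is also a structural obstruction: Riemann--Lebesgue needs the contour on $\T$, where $|\lambda^n|=1$, while controlling the resolvent near $E$ forces the contour off $\T$, where $|\lambda|^n\to\infty$; these cannot be reconciled. The known proofs avoid boundary resolvent estimates altogether. One such argument is available verbatim from Section~\ref{KT} of this paper: run the construction used in the proof of Theorem~\ref{KTB2} (the seminorm $q(x)=\limsup_n\|T^nx\|$, the completion $Z$ of $X/\operatorname{Ker}q$ as in \cite[Proposition 2.1]{Vu97}, and the induced invertible isometry $V$ with $V\pi=\pi T$ and $\sigma(V)\subseteq E$; in the degenerate case $\T\subseteq\sigma(T)$ synthesis forces $f\equiv 0$). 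Since $\sup_{n\in\Z}\|V^n\|=1$, the full Wiener calculus $g(V)=\sum_{k\in\Z}b_kV^k$ is available, and $g(V)=0$ for every $g\in W(\T)$ vanishing on a neighbourhood of $\sigma(V)$ --- a fact coming from Gelfand theory and the regularity of $W(\T)$, not from contour integration. Synthesis then gives $f(V)=\lim_jg_j(V)=0$, hence $\pi f(T)=f(V)\pi=0$, i.e.\ $\lim_n\|T^nf(T)x\|=0$ for every $x\in X$, and the upgrade from strong to operator-norm convergence is the $\ell^\infty(X)$ argument at the end of the proof of Theorem~\ref{KTB2}. Your reduction would slot neatly in front of such an argument, but without an ingredient of this kind your proposal does not prove the theorem.
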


In Theorem \ref{KTthm} the assumption of spectral synthesis means that there is a sequence $(g_n)_{n\ge1}$ of functions in $W(\T)$ such that each $g_n$ vanishes on a neighbourhood of $\sigma_u(T)$ in $\T$ and $\lim_{n\to\infty} \|g_n-f\|_W = 0$ (here $f$ denotes the boundary function of $f$ on $\T$).   In general, this is stronger than the condition that $f$ vanishes on $\sigma_u(T)$, which is a necessary condition for the conclusion of Theorem \ref{KTthm}; see Remark \ref{remend}(a).   Moreover, if $f \in W^+(\D)$, $f$ vanishes on a closed subset $E$ of $\T$, and $\lim_{n\to\infty} \|T^nf(T)\| = 0$ for every contraction $T$ on an arbitrary Banach space such that $\sigma_u(T) \subseteq E$, then $f$ is of spectral synthesis in $W(\T)$ with respect to $E$;  see \cite[p.\,319]{KT86}, where this fact is attributed to J. Bourgain.

It is desirable to have results in which spectral synthesis is not assumed.   If $\sigma_u(T)$ is countable, and $f$ vanishes on $\sigma_u(T)$, then the condition of spectral synthesis in $W(\T)$ is satisfied \cite[p.\,60]{Kah70}.  Thus, in Theorem \ref{KTthm}, if $\sigma_u(T)$ is countable, the assumption of spectral synthesis can be replaced by the  assumption that $f$ vanishes on $\sigma_u(T)$.

In \cite[Theorem 5]{AOR87}, there is a variant of Theorem \ref{KTthm} in which the assumption of spectral synthesis is replaced by the assumption that $f'\in W^+(\D)$ and $f$ vanishes on $\sigma_u(T)$.   Other related results can be found in \cite{AR89}.

The case when $\sigma_u(T) \subseteq \{1\}$ and $f(z)=1-z$ is a particularly important corollary of Theorem \ref{KTthm}, and it was stated in\cite[Theorem 1]{KT86}.  

\begin{cor}   \label{KTcor}
Let $T$ be a power-bounded operator on a Banach space.     Then $\lim_{n\to \infty} \|T^n(I-T)\| = 0$ if (and only if)   $\sigma_u(T) \subseteq \{1\}$.
\end{cor}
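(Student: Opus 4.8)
The plan is to obtain the nontrivial (if) direction as a direct corollary of Theorem \ref{KTthm}, and to dispatch the (parenthetical) (only if) direction by an elementary approximate-eigenvector argument that does not need the full theorem.

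For the (if) direction I would take $f(z) = 1 - z$. This $f$ lies in $W^+(\D)$ (its only nonzero Taylor coefficients are $a_0 = 1$ and $a_1 = -1$, so $\|f\|_W = 2$), and $f(T) = I - T$ under the $W^+(\D)$-calculus. The boundary function of $f$ on $\T$ is $z \mapsto 1 - z$, which vanishes precisely at $z = 1$. Hence, assuming $\sigma_u(T) \subseteq \{1\}$, the function $f$ vanishes on $\sigma_u(T)$. Since $\sigma_u(T)$ has at most one point it is in particular countable, so by the fact recalled after Theorem \ref{KTthm} (attributed to Kahane) $f$ is automatically of spectral synthesis in $W(\T)$ with respect to $\sigma_u(T)$. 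Theorem \ref{KTthm} then applies verbatim and gives $\lim_{n\to\infty} \|T^n(I-T)\| = 0$.

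For the (only if) direction I would argue contrapositively. Suppose $\lambda \in \sigma_u(T)$; I want to show $\lambda = 1$ whenever $\|T^n(I-T)\| \to 0$. Because $T$ is power-bounded we have $\sigma(T) \subseteq \overline{\D}$, so every $\lambda \in \sigma_u(T)$ lies on the boundary of $\sigma(T)$ and therefore belongs to the approximate point spectrum: there are unit vectors $x_m$ with $(T-\lambda)x_m \to 0$. Writing $M := \sup_{n} \|T^n\|$ and using the identity $T^n - \lambda^n = \sum_{j=0}^{n-1} \lambda^{n-1-j} T^j (T-\lambda)$ together with $|\lambda| = 1$, I obtain $\|(T^n - \lambda^n)x_m\| \le nM \|(T-\lambda)x_m\|$. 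Combining this with $\|(I-T)x_m - (1-\lambda)x_m\| = \|(T-\lambda)x_m\|$ and $\|T^n(T-\lambda)x_m\| \le M\|(T-\lambda)x_m\|$ yields, for each fixed $n$, that $\|T^n(I-T)x_m - \lambda^n(1-\lambda)x_m\| \to 0$ as $m \to \infty$. Since $|\lambda^n(1-\lambda)| = |1-\lambda|$ and $\|x_m\| = 1$, this forces $\|T^n(I-T)\| \ge |1-\lambda|$ for every $n$, and then letting $n \to \infty$ gives $|1-\lambda| = 0$, i.e.\ $\lambda = 1$.

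The substantive mathematics sits entirely inside Theorem \ref{KTthm}, which I am free to quote; granting it, the (if) direction reduces to the single observation that a one-point set is trivially a set of synthesis. The only genuinely self-contained step is the (only if) direction, and there the one point to watch is the order of limits: the bound $\|(T^n-\lambda^n)x_m\| \le nM\|(T-\lambda)x_m\|$ deteriorates as $n$ grows, so I must fix $n$ and send $m \to \infty$ first to extract $\|T^n(I-T)\| \ge |1-\lambda|$, and only then let $n \to \infty$. Power-boundedness is precisely what makes each per-$n$ estimate uniform in $m$, so I expect no real obstacle beyond this bookkeeping.
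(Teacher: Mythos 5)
Your proof is correct. The (if) direction is exactly the paper's route: the paper presents the corollary as the special case $f(z)=1-z$ of Theorem \ref{KTthm}, with the spectral synthesis hypothesis discharged by the Kahane fact for countable sets quoted immediately before the corollary --- precisely your argument. Your (only if) direction, however, differs in mechanism from what the paper does. In Remark \ref{remend}(a) the paper obtains necessity from spectral mapping: $\lambda^n f(\lambda) \in \sigma(T^n f(T))$ (for $f(z)=1-z$ this is just the polynomial spectral mapping theorem applied to $p(z)=z^n(1-z)$), whence $|1-\lambda| = |\lambda^n(1-\lambda)| \le \|T^n(I-T)\| \to 0$ in one line. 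You instead prove the same lower bound $\|T^n(I-T)\| \ge |1-\lambda|$ by hand, using that every point of $\sigma_u(T)$ is a boundary point of $\sigma(T)$ and hence an approximate eigenvalue, together with a correctly ordered double limit (fix $n$, let $m\to\infty$, then $n\to\infty$). Both are valid: the spectral-mapping argument is shorter and is what the paper's framework supplies for free, while yours trades it for the standard inclusion of the boundary of the spectrum in the approximate point spectrum plus some telescoping estimates, making that direction self-contained at the cost of extra bookkeeping.
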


Recently sharp estimates of the rate of decay in Corollary \ref{KTcor}, in terms of the norm of $\|(\l I - T)^{-1}\|$ for $\l \in \T \setminus \{1\}$, have been obtained in \cite{Sei16} (for arbitrary Banach spaces) and \cite{NS20} (for Hilbert spaces).

The following result from \cite[Proposition 1.6]{KvN97} is another variant of Theorem \ref{KTthm} in which spectral synthesis is not assumed.  It will play an important role in this paper.

\begin{thm}[K\'erchy, van Neerven] \label{KvNthm}
 Let $T$ be a polynomially bounded operator on a Banach space.  Let $f \in A(\D)$, and assume that $f$ vanishes on $\sigma_u(T)$.   Then $\lim_{n\to\infty} \|T^nf(T)\| = 0$.
\end{thm}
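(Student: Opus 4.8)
The plan is to isolate the purely \emph{spectral} content, which is easy, and then to obtain the \emph{norm} convergence by a Tauberian argument; the latter is the real content. Since $T$ is polynomially bounded it is power-bounded, with $M:=\sup_{n\ge0}\|T^n\|<\infty$, and it carries a bounded $A(\D)$-calculus satisfying $\|h(T)\|\le C\|h\|_\infty$ for $h\in A(\D)$. I first dispose of a degenerate case: if $\sigma_u(T)$ had positive Lebesgue measure then, by the F.\ and M.\ Riesz theorem, the only $f\in A(\D)$ vanishing on it would be $f\equiv0$, for which the claim is trivial; so I may assume that $E:=\sigma_u(T)$ is a closed subset of $\T$ of measure zero. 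It is then convenient to note that $\mathcal{J}:=\{h\in A(\D):\lim_{n\to\infty}\|T^nh(T)\|=0\}$ is a closed ideal of $A(\D)$, since $\|T^n(hg)(T)\|\le\|g(T)\|\,\|T^nh(T)\|$ and $\|T^nh(T)\|\le MC\|h\|_\infty$; thus the theorem is equivalent to the inclusion $I(E)\subseteq\mathcal{J}$, where $I(E):=\{h\in A(\D):h|_E=0\}$.

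The spectral half is quick. Working in the commutative Banach algebra generated by $T$, the characters send $T$ into a compact set $K\subseteq\overline{\D}$ with $K\cap\T=E$ and satisfy $\gamma(f(T))=f(\gamma(T))$, whence $r(T^nf(T))\le\sup_{\lambda\in K}|\lambda|^n|f(\lambda)|$. A compactness argument shows that the part of $K$ close to $\T$ is close to $E$, where $f$ is small because $f|_E=0$ and $f$ is continuous; combined with $|\lambda|^n\to0$ uniformly on $\{|\lambda|\le1-\eta\}$ this gives $r(T^nf(T))\to0$. The entire difficulty is to upgrade this spectral-radius decay to decay of the \emph{norm}, which on a general Banach space may be far larger.

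For the norm I would pass to the $L(X)$-valued generating function $F(z):=\sum_{n\ge0}T^nf(T)\,z^n=f(T)(I-zT)^{-1}$, holomorphic on $\D$. Because $1/z\notin\sigma(T)$ for $z$ near any point of $\T\setminus E^{*}$, where $E^{*}:=\{\bar\lambda:\lambda\in E\}$, the resolvent, and hence $F$, continues holomorphically across $\T$ off the closed null set $E^{*}$. The aim is then to apply a vector-valued Tauberian theorem of Ingham--Karamata / Katznelson--Tzafriri type: a bounded one-sided sequence whose generating function extends suitably past $\T$ away from a closed null set must converge to $0$ in norm. Here $(T^nf(T))_n$ is bounded by $MC\|f\|_\infty$, and the hypothesis $f|_E=0$ is exactly what tames $f(T)(I-zT)^{-1}$ as $z$ approaches $E^{*}$, counteracting the resolvent singularities.

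The main obstacle is this final step when $E$ is an \emph{uncountable} null set. For countable $E$ the conclusion is classical and already implicit in the introduction (there $f|_E=0$ already forces synthesis), so the substance lies in the passage to arbitrary closed null sets, where the measure-zero-ness of $E$ must be exploited through a boundary (F.\ and M.\ Riesz) argument together with quantitative control of $F$ near $E^{*}$. Equivalently, in the ideal picture one must show that $\mathcal{J}$ has hull contained in $E$ and no common inner divisor, so that the Beurling--Rudin classification of closed ideals of $A(\D)$ forces $\mathcal{J}\supseteq I(E)$; exhibiting the required members of $\mathcal{J}$ that vanish on $E$ but are genuinely non-analytic, necessarily outer or with inner part supported on $E$, is precisely where polynomial boundedness and the null-ness of $E$ are indispensable. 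As a consistency check, the statement is transparent when $T$ is unitary, since then the continuous functional calculus gives $f(T)=0$; the general case may be approached through K\'erchy's unitary asymptote $U$ of $T$, for which $\sigma(U)\subseteq E$ and hence $f(U)=0$ by the spectral theorem, the remaining task being to tie the norm decay of $T^nf(T)$ to this vanishing.
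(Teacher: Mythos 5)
The first thing to note is that the paper does not prove this statement at all: it is imported as a black box from \cite[Proposition 1.6]{KvN97} and then applied (to the isometry $V$) in the proof of Theorem~\ref{KTB2}, so your proposal has to stand entirely on its own. It does not. Everything you actually establish --- the reduction to the case where $E:=\sigma_u(T)$ has measure zero, the observation that $\mathcal{J}=\{h\in A(\D):\|T^nh(T)\|\to0\}$ is a closed ideal, and the decay of the spectral radius $r(T^nf(T))$ --- is the easy part. The entire content of the theorem, upgrading spectral-radius decay to \emph{norm} decay, is never proved; you flag it yourself as ``the main obstacle'' and ``the remaining task'', and none of the three strategies you sketch is carried out.

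Moreover, your flagship strategy cannot work as stated. A Tauberian theorem of the form ``a bounded sequence whose generating function $F(z)=f(T)(I-zT)^{-1}$ continues holomorphically across $\T$ off a closed null set, with $f$ vanishing on that set, tends to $0$ in norm'' would apply verbatim to \emph{every} contraction $T$ with $\sigma_u(T)\subseteq E$ and every $f\in W^+(\D)$ vanishing on $E$ (boundedness and the holomorphic continuation of $F$ off $E^*$ hold in that generality); by Bourgain's result quoted after Theorem~\ref{KTthm} (see \cite[p.\,319]{KT86}), this would force every such $f$ to be of spectral synthesis with respect to $E$, contradicting the fact, stated in the introduction, that vanishing on $E$ is in general strictly weaker than synthesis. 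So no such Tauberian theorem exists; in particular, your claim that $f|_E=0$ ``tames'' $F$ near $E^*$ is false for general contractions, and making polynomial boundedness supply that control is precisely the problem you never address. The Beurling--Rudin reformulation has the same status: showing that $\mathcal{J}$ has hull in $E$ and trivial common inner divisor \emph{is} the theorem, not a reduction of it; and the countable case is not ``classical'' here, since $f\in A(\D)$ need not lie in $W^+(\D)$, so Theorem~\ref{KTthm} does not apply. The route that can be completed is your last one, but on a Banach space there is no spectral theorem, so ``$f(U)=0$ by the spectral theorem'' is unavailable. What one actually does (essentially \cite{KvN97}, mirrored in the proof of Theorem~\ref{KTB2}) is pass to the isometric asymptote $V$, an invertible isometry inheriting polynomial boundedness; since $\|V^{-n}\|=1$, writing a trigonometric polynomial $q$ as $z^{-N}p$ with $p$ analytic and $\|p\|_\infty=\|q\|_\infty$ yields a bounded $C(\T)$-calculus for $V$; a localization argument then shows this calculus annihilates every function vanishing on $\sigma(V)\subseteq E$, whence $f(V)=0$; finally the asymptote property gives $\|T^nf(T)x\|\to0$ for each $x$, and an $\ell^\infty(X)$ argument upgrades this to the operator norm. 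These steps --- exactly where polynomial boundedness and invertibility of the isometry enter --- are what is missing from your proposal.
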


In \cite{ESZ1} the authors consider certain “isometric function algebras” $R$ of continuous functions on $\T$ and prove a version of the Katznelson--Tzafriri theorem for an operator $T$ on a Banach space satisfying $\|p(T)\| \le \|p\|_R$ for all polynomials $p$, and for suitable functions $f \in A(\D)$ whose boundary functions are of spectral synthesis in $R$ with respect to $\sigma_u(T)$.  
When $R = W(\T)$, \cite[Theorem 2.10]{ESZ1} gives essentially the same result as Theorem \ref{KTthm}.  When $R = C(\T)$, it recovers Theorem \ref{KvNthm} for contractions on Hilbert spaces, using von Neumann's inequality.    More recently, the following, more general, result for power-bounded operators on Hilbert spaces  was proved in \cite[Theorem~2.1]{Lek09} by considering a certain ergodic condition.

\begin{thm}[L\'eka] \label{Lekthm}
Let $T$ be a power-bounded operator on a Hilbert space.  Let $f \in W^+(\D)$, and assume that $f$ vanishes on $\sigma_u(T)$.   Then
$$\lim_{n\to\infty} \|T^nf(T)\| = 0.$$
\end{thm}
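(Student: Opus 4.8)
The plan is to combine two soft reductions, valid on any Banach space, with one genuinely Hilbert-space input that carries the real weight.

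First I would record an almost-monotonicity of the sequence $a_n := \|T^n f(T)\|$. Putting $M := \sup_{k\ge0}\|T^k\|$ and using $T^{n+m}f(T)=T^n\bigl(T^m f(T)\bigr)$ gives $a_{n+m}\le M a_m$ for all $n,m\ge0$, so that
\[
\limsup_{n\to\infty} a_n \;\le\; M\,\liminf_{n\to\infty} a_n .
\]
Hence it suffices to prove $\liminf_n a_n = 0$; equivalently, that some subsequence of $\bigl(T^n f(T)\bigr)$ tends to $0$ in operator norm.

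Second I would invoke the Jacobs--de Leeuw--Glicksberg splitting. As $T$ is power-bounded on a Hilbert space $H$, each orbit is bounded, hence relatively weakly compact, and $H=H_r\oplus H_s$, where $H_r$ is the closed span of the unimodular eigenvectors of $T$ and $H_s$ is the space of flight vectors. Both summands are $T$-invariant, hence $f(T)$-invariant. On an eigenvector $Tx=\l x$ with $\l\in\sigma_u(T)$ one has $f(T)x=f(\l)x=0$, since $f$ vanishes on $\sigma_u(T)$; thus $f(T)|_{H_r}=0$ and $f(T)H\subseteq H_s$. Along a set of $n$ of density one this already forces $T^n f(T)\to0$ in the weak operator topology, and reduces the theorem to proving $\|T^n z_n\|\to0$ uniformly for $z_n$ in the unit ball of $\overline{f(T)H}\subseteq H_s$.

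The remaining step is the heart of the matter, and is where the Hilbert-space hypothesis is indispensable. The spectral assumption is best encoded through the asymptote of $T$: from the seminorm $\|Jx\|^2=\operatorname{LIM}_n\|T^nx\|^2$ attached to a Banach limit one builds an intertwining $JT=UJ$ with $U$ normal, and the vanishing of $f$ on $\sigma_u(T)$ (which carries the spectrum of the relevant asymptote) gives $f(U)=0$. The difficulty is that this identity, like the weak-operator convergence above, furnishes only pointwise $\liminf$-type information, whereas the operator-norm conclusion demands \emph{uniform} control, and the extremal vector attaining $\|T^n f(T)\|$ moves with $n$. I expect this passage from weak to uniform to be the main obstacle. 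L\'eka's device is an ergodic estimate in the spirit of Blum--Hanson: one bounds $\sup_{\|x\|\le1}\frac1N\sum_{n<N}\|T^n f(T)x\|^2$ by a quantity governed by the asymptotic spectral mass of $T$ on $\overline{f(T)H}$ at $\T$, which $f(U)=0$ forces to vanish; combined with the almost-monotonicity of $(a_n)$ this yields $\liminf_n a_n=0$, and hence $\lim_n\|T^n f(T)\|=0$.
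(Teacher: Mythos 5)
Your proposal is a plan rather than a proof, and it stops exactly where the theorem lives. The two reductions you actually carry out are sound but soft: the submultiplicativity $a_{n+m}\le Ma_m$ (hence $\limsup_n a_n\le M\liminf_n a_n$) and the Jacobs--de Leeuw--Glicksberg splitting with $f(T)|_{H_r}=0$ yield only pointwise or weak-operator information, while the entire content of L\'eka's theorem is the upgrade from such information to operator-norm decay. That upgrade you explicitly defer (``I expect this passage from weak to uniform to be the main obstacle''), invoking ``L\'eka's device'' without stating or proving it. Moreover, the way you propose to use it does not close the loop even formally: $\sup_{\|x\|\le1}\frac1N\sum_{n<N}\|T^nf(T)x\|^2$ is dominated by $\frac1N\sum_{n<N}a_n^2$ (sup of an average is at most the average of the sups), not the other way round, so its vanishing does not by itself give $\liminf_n a_n=0$. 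What is needed is precisely L\'eka's nontrivial converse --- his ergodic criterion that for power-bounded $T$ on a Hilbert space the vanishing of this uniform Ces\`aro quantity forces $\|T^nf(T)\|\to0$ --- and that is the Hilbert-space engine of the whole argument; it is absent. Similarly, $f(U)=0$ rests on the spectral inclusion $\sigma(U)\subseteq\sigma_u(T)$ for the unitary asymptote, which you assert parenthetically but do not prove (it needs, e.g., the commutant-homomorphism argument giving $\sigma(V)\subseteq\sigma(T)$ for the isometric asymptote, plus a separate treatment of the case $\T\subseteq\sigma(T)$, where the asymptote need not be unitary but $f\equiv0$ trivially).

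In addition, one of your intermediate reductions is to a false statement. You claim it suffices to show $\|T^nz_n\|\to0$ uniformly for $z_n$ in the unit ball of $\overline{f(T)H}$. Take $T=\operatorname{diag}(1-1/k)_{k\ge1}$ on $\ell^2$ and $f(z)=1-z$: then $T$ is a self-adjoint contraction, $\sigma_u(T)=\{1\}$, $f(1)=0$, and indeed $\|T^nf(T)\|=\sup_k(1-1/k)^n/k\to0$; but $f(T)=\operatorname{diag}(1/k)$ has dense range, so $\overline{f(T)H}=\ell^2$ and $\sup\{\|T^nz\|:z\in\overline{f(T)H},\ \|z\|\le1\}=\|T^n\|=\sup_k(1-1/k)^n=1$ for every $n$. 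The uniformity must be taken over the image $f(T)\{x:\|x\|\le1\}$, not over the unit ball of the closed range; this is exactly the ``extremal vector moves with $n$'' phenomenon you identify, and it cannot be absorbed into the closed range. For comparison, the paper does not prove this statement by an ergodic estimate at all: it deduces it as a special case of Theorem \ref{KTB}, using Peller's result that every power-bounded operator on a Hilbert space has a bounded $\B$-calculus. There the isometric asymptote $V$ is shown to be polynomially bounded (Theorem \ref{cfbd} and Proposition \ref{isom}), Theorem \ref{KvNthm} then gives $f(V)=0$ and hence strong convergence, and the weak-to-uniform passage is handled by transferring the strong result to $\ell^\infty(X)$ --- a move available there because the strong statement holds on arbitrary Banach spaces with a bounded $\B$-calculus, but not available on your route, since $\ell^\infty(H)$ is not a Hilbert space.
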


 If $T$ is a completely non-unitary contraction on a Hilbert space, $f \in H^\infty(\D)$ and the boundary function of $f$ vanishes on $\sigma_u(T)$, then Bercovici \cite{Ber90b} showed that $\lim_{n\to\infty} \|T^nf(T)\| = 0$, where $f(T)$ is defined in the Sz.-Nagy-Foias functional calculus.

Other results of Katznelson--Tzafriri type in slightly different situations have been proved in \cite{Aba16}, \cite{Ber90} and \cite{Zar13}, for example.   Short surveys of the topic can be found in \cite[Section 5]{CT04} and \cite{Lek17}, and \cite{BS22a} provides a longer survey.

The Banach algebra $\B(\D)$ of analytic Besov functions on $\D$ has been considered in various contexts in the literature.  It is often defined in terms of the boundary functions on $\T$, but it can alternatively be defined directly from functions on $\D$;  see \eqref{bddef} below.   This algebra satisfies $W^+(\D) \subsetneq \B(\D) \subsetneq A(\D)$ with continuous inclusions, and moreover the polynomials are dense in $\B(\D)$.  Nevertheless, $\B(\D)$ does not appear to fit into the framework of~\cite{ESZ1}.

In this paper we consider a given power-bounded operator $T$ on a Banach space $X$ which has a bounded $\B(\D)$-calculus (abbreviated to $\B$-calculus in the rest of this paper), that is, there exists a (unique) bounded algebra homomorphism from $\B(\D)$ to $L(X)$ mapping $1$ to $I$ and $z$ to $T$.   Any polynomially bounded operator has a bounded $\B$-calculus.  
 Peller showed in \cite{Pel82} that a $\B$-calculus exists for all power-bounded operators on a Hilbert space, and Vitse showed in \cite{Vit04} and \cite{Vit05} that a $\B$-calculus exists for all Ritt operators on any Banach space.  
The recent papers \cite{BGT} and \cite{BGT2} have developed a detailed theory of the analogous $\B$-calculus for (negative) generators of bounded $C_0$-semigroups, involving the Banach algebra $\B(\C_+)$, where $\C_+$ is the right half-plane.  
The disc case can be treated more easily, due to the density of the polynomials and the operators being bounded.  Arnold \cite{Arn21} has adapted some results from \cite{BGT} and \cite{BGT2} to characterise the operators $T$ which have a $\B$-calculus in terms of an integral condition.   We were unaware of Arnold's work while we were producing this paper, and we have obtained the same characterisation independently.  Our method is different in various ways, and we present it here in order to make the paper more self-contained and to provide a formulation which is better suited to our purposes, for example providing the spectral mapping theorem in Theorem \ref{SIT}.

We prove the following version of the Katznelson--Tzafriri theorem for operators $T$ which have a bounded $\B$-calculus, and functions $f \in \B(\D)$ which vanish on $\sigma_u(T)$.   Theorem \ref{KTB} contains Theorem~\ref{Lekthm} as a special case, and it applies to many more operators than Theorem \ref{KvNthm} does, although $\B(\D)$ is smaller than $A(\D)$.  It applies to fewer operators than Theorem \ref{KTthm} does, but it extends the class of functions $f$ from $W^+(\D)$ to $\B(\D)$ and it does not require spectral synthesis.

\begin{thm} \label{KTB}
Let $T$ be a power-bounded operator on a Banach space, and assume that $T$ has a bounded $\B$-calculus.   Let $f \in \B(\D)$, and assume that $f$ vanishes on $\sigma_u(T)$.   Then $\lim_{n\to\infty} \|T^nf(T)\| = 0$.
\end{thm}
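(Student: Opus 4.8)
The plan is to recast the conclusion as a statement about a closed ideal of $\B(\D)$ and then to establish the required inclusion by an approximation argument carried out in the Besov norm. First I would record the elementary reduction that it suffices to prove $\liminf_{n\to\infty}\|T^nf(T)\|=0$: writing $a_n=\|T^nf(T)\|$ and $M=\sup_k\|T^k\|$, the identity $T^{n+m}f(T)=T^m\,T^nf(T)$ gives $a_{n+m}\le M a_n$, so any subsequence tending to $0$ forces $a_n\to0$. Next I would introduce $\mathcal I:=\{g\in\B(\D):\lim_{n}\|T^ng(T)\|=0\}$. Using the algebra-homomorphism property and the calculus bound $\|g(T)\|\le C\|g\|_{\B}$, one checks that $\mathcal I$ is a closed ideal of $\B(\D)$: it is an ideal because $T^n(gh)(T)=\bigl(T^ng(T)\bigr)h(T)$, and it is closed because $\|T^n(g-g_k)(T)\|\le MC\|g-g_k\|_{\B}$ uniformly in $n$. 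The theorem is therefore equivalent to the inclusion $J(\sigma_u(T))\subseteq\mathcal I$, where $J(E)=\{g\in\B(\D):g|_E=0\}$. Note that, since $f\in\B(\D)\subseteq A(\D)$ and $f\not\equiv0$, the F.\ and M.\ Riesz theorem forces $\sigma_u(T)$ (a subset of the boundary zero set of $f$) to have Lebesgue measure zero, so the relevant sets are thin.

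Second, I would treat the easy case of functions with a spectral gap: if $g\in\B(\D)$ and the boundary function of $g$ vanishes on a neighbourhood of $\sigma_u(T)$ in $\T$, then $g\in\mathcal I$. The clean way to see this is via the Beurling spectrum of the bounded sequence $u_n=T^ng(T)$. On one hand, because $g(T)$ commutes with $T$, this spectrum is contained in $\sigma_u(T)$; on the other hand, for each $\l_0\in\sigma_u(T)$ one can choose $\psi\in W^+(\D)$ with $\psi(\l_0)\ne0$ whose boundary function is supported where $g$ vanishes, so that $\psi g\equiv0$ and hence $\psi(T)g(T)=0$, showing that $\l_0$ is not in the spectrum. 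Thus the spectrum is empty, and a Tauberian theorem for bounded orbits of a power-bounded operator yields $\|T^ng(T)\|\to0$. Power-boundedness enters exactly here, to guarantee that the orbit is bounded.

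The main step, and the main obstacle, is to pass from ``vanishing near $\sigma_u(T)$'' to ``vanishing on $\sigma_u(T)$'' without assuming spectral synthesis, that is, to prove that $J(\sigma_u(T))$ lies in the $\B$-norm closure of the spectral-gap functions. Concretely, I would construct analytic cut-offs $u_\ep\in\B(\D)$ that equal $1$ off a shrinking neighbourhood of $\sigma_u(T)$ and vanish near $\sigma_u(T)$, and prove that $\|f(1-u_\ep)\|_{\B}\to0$; then $fu_\ep$ has a spectral gap, hence lies in $\mathcal I$ by the previous step, and $f=\lim_\ep fu_\ep$ in $\B(\D)$ puts $f\in\overline{\mathcal I}=\mathcal I$. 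This is precisely the point where the Besov norm makes life hard: differentiating $f(1-u_\ep)$ produces a term $f\,u_\ep'$ concentrated near $\sigma_u(T)$, and the radial integral $\int_0^1\sup_{|z|=r}|f(z)u_\ep'(z)|\,dr$ of the type defining the $\B$-norm diverges unless the vanishing of $f$ dominates the blow-up of $u_\ep'$ -- the same mechanism that makes $f/(\l_0-z)\notin\B(\D)$ for $\l_0\in\T$ in general. The crux is thus a quantitative estimate matching the rate at which $f$ vanishes near $\sigma_u(T)$, controlled through $f\in\B(\D)$ and the definition~\eqref{bddef}, against the derivative growth of the cut-offs, using the measure-zero thinness of $\sigma_u(T)$. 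In effect one must prove the Besov analogue of the classical fact that closed sets of measure zero are sets of synthesis in the disc algebra $A(\D)$, now upgraded from the supremum norm to the stronger $\B$-norm; it is this automatic synthesis that lets the theorem dispense with synthesis as a hypothesis.

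Finally, combining the two steps gives $J(\sigma_u(T))\subseteq\mathcal I$, which is the assertion of the theorem. I would close by noting that the argument uses only the bounded $\B$-calculus together with power-boundedness, so it is independent of polynomial boundedness (unlike Theorem~\ref{KvNthm}) and, via $W^+(\D)\subseteq\B(\D)$, contains Theorem~\ref{Lekthm}; the spectral mapping theorem for the $\B$-calculus can be invoked to locate $\sigma_u(T)$ inside the boundary zero set of $f$ and to streamline the spectral bookkeeping.
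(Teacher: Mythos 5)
Your proposal has two genuine gaps, one fatal to the step you call ``easy'' and one at the step you yourself identify as the crux. In the spectral-gap step you ask for $\psi\in W^+(\D)$ with $\psi(\l_0)\ne0$ whose boundary function is supported where $g$ vanishes, so that $\psi g\equiv0$. No such $\psi$ exists: holomorphic functions have no zero divisors, so $\psi g\equiv 0$ on $\T$ forces $\psi g\equiv0$ on $\D$ and hence $\psi\equiv0$ or $g\equiv0$; equivalently, a nonzero function in $W^+(\D)\subseteq H^\infty(\D)$ cannot vanish on an arc of $\T$. The Beurling-spectrum mechanism you have in mind genuinely requires functions in $W(\T)$ acting by convolution on a (two-sided) orbit sequence, not analytic functions inserted into the calculus of the non-invertible operator $T$; this step is repairable, but not as written.

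The more serious problem is the main step, which you do not prove: you reduce the theorem to the assertion that every closed set of Lebesgue measure zero in $\T$ is a set of spectral synthesis for $\B(\D)$, i.e.\ that any $f\in\B(\D)$ vanishing on such a set $E$ is a $\B$-norm limit of functions vanishing on neighbourhoods of $E$, and you leave this as ``the crux\dots one must prove''. This is not a routine quantitative estimate. The analogous statement fails for $W(\T)$ (there exist closed sets of measure zero, even Helson sets, that are not of synthesis), and the paper's introduction recalls Bourgain's observation tying Katznelson--Tzafriri conclusions for $W(\T)$ to synthesis; whether such automatic synthesis holds for the weaker $\B$-norm is established nowhere, and the paper deliberately circumvents the question by a completely different route. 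Its proof passes to the limit isometry $V$ induced by $T$ on the completion of $X/\operatorname{Ker}q$, where $q(x)=\limsup_{n\to\infty}\|T^nx\|$, so that $V$ is invertible with $\sigma(V)\subseteq\sigma_u(T)$ when $\T\not\subseteq\sigma(T)$; it transfers the bounded $\B$-calculus to $V$, invokes Theorem~\ref{cfbd} to obtain the (dGSF) condition for $V$, Proposition~\ref{isom} to conclude that the doubly power-bounded invertible $V$ is polynomially bounded, and then Theorem~\ref{KvNthm} to get $V^nf(V)\to0$, whence $f(V)=0$ and $\lim_{n\to\infty}\|T^nf(T)x\|=0$ for each $x$; an $\ell^\infty(X)$ argument then upgrades this to operator-norm convergence. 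Your closed-ideal reduction and the measure-zero observation are correct, but without a proof of the synthesis statement (or a substitute such as the paper's isometry argument), the proposal does not prove the theorem.
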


The remainder of this paper has three sections.   In Section \ref{BD}, we set out several properties of $\B(\D)$, including a reproducing formula for functions in $\B(\D)$ (Proposition \ref{repf}).  This section is broadly analogous to parts of \cite[Section 2]{BGT} and \cite[Section 3]{BGT2}.  
Section~\ref{BC} contains our approach to the $\B$-calculus, and its properties.   
In Section \ref{KT}, we prove Theorem \ref{KTB} and we mention a more general version (Theorem \ref{KTB3}).   The proof uses a combination of ideas from earlier results of this type, together with Theorems \ref{KvNthm} and~\ref{cfbd}, and Proposition \ref{isom} which is extracted from \cite[Section 2]{CG08}.

\section{The algebra $\B(\D)$} \label{BD}

There is more than one way to define the algebra $\B(\D)$ of analytic Besov functions on the unit disc $\D$.   
  For our purposes, it is convenient to define it as the space of all holomorphic functions $f$ on $\D$ such that
\begin{equation} \label{bddef}
\|f\|_{\B_0} := \int_0^1 \sup_{|\theta|\le\pi} |f'(re^{i\theta})| \, dr < \infty.
\end{equation}
Note that $\|\cdot\|_{\B_0}$ is a seminorm on $\B(\D)$.  

This definition of $\B(\D)$ is analogous to the definition of the algebra of analytic Besov functions on $\C_+$ considered in \cite{BGT}, \cite{BGT2} and \cite{V1} (and denoted by $\B(\C_+)$ in this paper).   Alternatively, $\B(\D)$ may be defined as the space of bounded holomophic functions on $\D$ whose boundary functions on $\T$ belong to a standard Besov space often denoted by $B^0_{\infty1}(\T)$, and this is the reason why functions in $\B(\D)$ are called analytic Besov functions.     There are many equivalent norms on $B^0_{\infty1}(\T)$ and they are usually defined via harmonic analysis.   Peller \cite{Pel82} defines the space $\B(\D)$ by that method and denotes it by $B^0_{\infty1}$.  In \cite{Vit05}, Vitse gives both definitions, denoting the space by $B^0_{\infty1}(\D)$, and using the definition by harmonic analysis in the proofs.   

In the following proposition we set out some properties of $\B(\D)$.  We omit the proofs, as they appear to be common knowledge.    They can be proved quite easily from our definition, and the first five statements are analogous to properties of $\B(\C_+)$ established in \cite[Section 2.2]{BGT}.   The fact that the inclusions in part (d) are proper is a well-known property of the spaces of boundary functions, and it is reflected in the existence of operators which have functional calculus with respect to the smaller algebra but not the larger algebra;  see Example \ref{PelVit}(a) and the discussion after Proposition~\ref{isom}.   Property (f) differs considerably from the case of $\B(\C_+)$, but it can easily be deduced from (e) and the Taylor series expansion of $f_r$ in $\B(\D)$.

\begin{prop}\phantomsection\label{prp:B_prop}
\begin{enumerate} [\rm(a)]
\item  If $f \in \B(\D)$, then $f$ is uniformly continuous, so $f$ extends to a function in $A(\D)$.
\item  A norm on $\B(\D)$ is defined by
\[
\|f\|_\B := \|f\|_\infty + \|f\|_{\B_0}, \qquad f \in \B(\D).
\]
\item  $(\B(\D), \|\cdot\|_\B)$ is a Banach algebra.
\item There are continuous proper inclusions:  $W^+(\D) \subsetneq \B(\D) \subsetneq A(\D)$.
\item 
Let $f\in\B(\D)$, and $f_r(z) := f(rz)$ for $r \in (0,1)$, $z\in\D$.  Then $f_r \in W^+(\D)$ and 
\begin{equation*} \label{frf}
\lim_{r \to 1-} \|f_r-f\|_\B = 0.
\end{equation*}
\item  The polynomials are dense in $\B(\D)$.
\end{enumerate}
\end{prop}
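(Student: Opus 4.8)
Parts (a)--(e) are standard facts about Besov spaces (the analogues for $\B(\C_+)$ being established in \cite{BGT}), so I concentrate on the density statement (f). The plan is to combine parts (d) and (e): part (e) lets me replace an arbitrary $f \in \B(\D)$ by one of its dilates $f_r$, which already lies in $W^+(\D)$, and then I approximate $f_r$ by its Taylor polynomials, transferring the convergence from the Wiener norm to the $\B$-norm via the continuous inclusion of part (d).

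Fix $\ep > 0$. By part (e), I can choose $r \in (0,1)$ with $\|f - f_r\|_\B < \ep/2$, so it remains only to approximate the single function $f_r$ by polynomials in the $\B$-norm. Writing the Taylor expansion $f(z) = \sum_{n\ge0} a_n z^n$, the dilate is $f_r(z) = \sum_{n\ge0} a_n r^n z^n$, and part (e) records that $f_r \in W^+(\D)$, that is $\sum_{n\ge0} |a_n| r^n = \|f_r\|_W < \infty$. Its $N$-th Taylor polynomial $p_N(z) = \sum_{n=0}^N a_n r^n z^n$ therefore satisfies $\|f_r - p_N\|_W = \sum_{n>N} |a_n| r^n$, the tail of a convergent series, so $\|f_r - p_N\|_W \to 0$ as $N \to \infty$. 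The continuous embedding $W^+(\D) \hookrightarrow \B(\D)$ from part (d) then gives $\|f_r - p_N\|_\B \to 0$, and choosing $N$ with $\|f_r - p_N\|_\B < \ep/2$ and adding the two estimates yields $\|f - p_N\|_\B < \ep$.

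If one prefers not to quote the embedding, the $\B$-norm of the tail can be bounded by hand: term-by-term differentiation together with the identity $\int_0^1 s^{n-1}\,ds = 1/n$ gives $\|f_r - p_N\|_{\B_0} \le \sum_{n>N} |a_n| r^n$, and the same quantity dominates $\|f_r - p_N\|_\infty$. I do not expect any genuine obstacle, since the real analytic content is carried by part (e) and (f) is a short two-step approximation. The only minor technical point is the interchange of sum and integral in the seminorm estimate, which is immediate from Tonelli's theorem as every term is nonnegative.
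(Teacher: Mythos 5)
Your proposal is correct and follows exactly the route the paper indicates: the paper omits proofs of (a)--(e) as standard (analogous to the $\B(\C_+)$ results in \cite{BGT}) and remarks that (f) ``can easily be deduced from (e) and the Taylor series expansion of $f_r$ in $\B(\D)$,'' which is precisely your two-step argument of dilating via (e) and then truncating the Taylor series of $f_r$, with the tail controlled in the $\B$-norm either through the embedding $W^+(\D)\hookrightarrow\B(\D)$ or by the direct estimate $\|f_r-p_N\|_\B\le 2\sum_{n>N}|a_n|r^n$. No gaps; the verification that the tail estimate holds (via $\int_0^1 s^{n-1}\,ds=1/n$ and Tonelli) is sound.
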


The $\B$-norm $\|\cdot\|_\B$ is equivalent to any of the standard norms arising from equivalent definitions of $\B(\D)$.   The Gelfand space of $\B(\D)$ consists of the point evaluations $f \mapsto f(z)$ for each $z \in \overline\D$.

Let $\E(\D)$ be the space of all holomorphic functions $g$ on $\D$ such that
\[
\|g\|_{\E_0} := \sup_{0<r<1} (1-r) \int_{-\pi}^\pi |g'(re^{i\theta})|\,d\theta < \infty.
\]
This space is the direct analogue of the space $\E$ of functions on $\C_+$ considered in \cite[Section 2.5]{BGT}.  
Note that the seminorm $\|\cdot\|_{\E_0}$ is unchanged if the supremum is taken over $r\in(a,1)$ where $a\in(0,1)$ or if $1-r$ is replaced by $r \log r^{-1}$. 

\begin{exas} \label{res}
(a)  For polynomials $p$ of degree $n\ge1$, the $\B$-norm satisfies
\[
\|p\|_\B \le C \log(n+1) \|p\|_\infty,
\]
where $C$ is an absolute constant.  A proof of this, based on harmonic analysis, is given in \cite[Lemma 2.3.7]{White}.  Related estimates for certain classes of operators appear in \cite{Pel82} and \cite{Vit05}.  

\noindent 
(b)  For $w \in \D$, let
\begin{equation*} \label{rhow}
\rho_w(z) := (1 - wz)^{-1},  \qquad z \in \D.
\end{equation*}
Then 
\[
\|\rho_w\|_{\Bq} = \int_0^1 \sup_{|z|=r} \left| \frac{w}{(1-wz)^2} \right|\,dr = \int_0^1 \frac{|w|}{(1-|w|r)^2} \,dr = \frac{|w|}{1-|w|}.
\]
Thus $\rho_w \in \B(\D)$.  Since $\|\rho_w\|_\infty= (1-|w|)^{-1}$,  we obtain 
\begin{equation} \label{Brho}
\|\rho_w\|_\B = \frac{1+|w|}{1-|w|}.
\end{equation}

From standard integration methods, one can obtain, for $t \in (0,1)$,
\begin{equation}  \label{valint}
\int_{-\pi}^\pi \frac{d\theta}{|1-te^{i\theta}|^2 } =
\int_{-\pi}^\pi \frac{d\theta} {1+t^2-2t\cos\theta} = \frac{2\pi}{1-t^2}.
\end{equation}
Hence, for $r \in (0,1)$,
\begin{multline*}
\int_{-\pi}^\pi \big| \rho_w'(re^{i\theta}) \big| \,d\theta =  \int_{-\pi}^\pi \frac{|w|}{|1 - |w|r e^{i\theta}|^2} \,d\theta
= \frac{2\pi|w|}{1-|w|^2r^2} \le \frac{2\pi|w|}{1-r}.
\end{multline*}
Thus $\rho_w \in\E(\D)$ and $\|\rho_w\|_{\E_0} \le 2\pi|w|$.
\end{exas}

There is a partial duality between the spaces $\B(\D)$ and $\E(\D)$.   For $f \in \B(\D)$ and $g \in \E(\D)$, define
\begin{align*}
\langle g,f \rangle_\B &:= \int_0^1 r \log \frac{1}{r} \int_{-\pi}^\pi f'(\rit)g'(re^{-i\theta}) \,d\theta\,dr \\
&= \int_{\D} \log  \frac{1}{|z|} \,  f'(z) g'(\overline{z}) \,dA(z),
\end{align*}
where $dA$ denotes area measure on $\D$.   The following estimates show that the integrals above are absolutely convergent:
\begin{align}\label{pdc}
|\langle g,f \rangle_\B| &\le \int_0^1 r \log \frac{1}{r}\int_{-\pi}^\pi \big| f'(\rit)g'(re^{-i\theta}) \big| \,d\theta\,dr  \\
&\le \int_0^1 \sup_{|\psi|\le\pi} \big|f'(r e^{i\psi})\big| (1-r) \int_{-\pi}^\pi \big|g'(\rit)\big| \,d\theta\,dr \nonumber \\
&\le \|g\|_{\E_0} \|f\|_{\B_0}. \nonumber
\end{align}

In \cite{BGT}, the space $\E$ corresponding to $\E(\D)$ plays a major role in constructing the $\B$-calculus on $\C_+$.   In this paper we use $\E(\D)$ only in the context of \eqref{pdc}.

We now give a reproducing formula for functions in $\B(\D)$, corresponding to \cite[(3.3)]{Arn21} and \cite[Proposition 2.20]{BGT}.

\begin{prop}  \label{repf}
Let $f \in \B(\D)$.   Then
\begin{equation} \label{rep}
f(w) = f(0) + \frac{2}{\pi} \langle \rho_w,f \rangle_{\B}, \qquad w\in\D.
\end{equation}
\end{prop}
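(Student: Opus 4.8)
The plan is to evaluate the pairing $\langle \rho_w,f\rangle_\B$ explicitly by means of power series, to establish \eqref{rep} first for polynomials, and then to extend it to all of $\B(\D)$ by a density and continuity argument.

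First I would expand both factors in the integrand. Writing $f(z) = \sum_{n\ge0} a_n z^n$, so that $f'(\rit) = \sum_{n\ge1} n a_n r^{n-1} e^{i(n-1)\theta}$, and using $\rho_w(z) = \sum_{n\ge0} w^n z^n$, so that $\rho_w'(re^{-i\theta}) = \sum_{m\ge1} m w^m r^{m-1} e^{-i(m-1)\theta}$. When $f$ is a polynomial these are finite sums, so I may multiply them out and integrate term by term in $\theta$. The orthogonality relation $\int_{-\pi}^\pi e^{i(n-m)\theta}\,d\theta = 2\pi\delta_{nm}$ collapses the double sum to a single one, giving
\begin{equation*}
\int_{-\pi}^\pi f'(\rit)\,\rho_w'(re^{-i\theta})\,d\theta = 2\pi \sum_{n\ge1} n^2 a_n w^n r^{2n-2}.
\end{equation*}
Inserting this into the definition of $\langle \rho_w,f\rangle_\B$ and integrating in $r$ then reduces everything to the elementary radial integral
\begin{equation*}
\int_0^1 r^{2n-1} \log\frac{1}{r}\,dr = \frac{1}{4n^2},
\end{equation*}
which I would evaluate by a single integration by parts. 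The crucial feature is that the factor $1/(4n^2)$ exactly cancels the $n^2$ arising from the two differentiations and the orthogonality, so that
\begin{equation*}
\langle \rho_w,f\rangle_\B = \frac{\pi}{2} \sum_{n\ge1} a_n w^n = \frac{\pi}{2}\bigl(f(w)-f(0)\bigr).
\end{equation*}
Multiplying by $2/\pi$ yields \eqref{rep} for every polynomial $f$.

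To pass to general $f\in\B(\D)$, I would invoke the density of the polynomials (Proposition \ref{prp:B_prop}(f)) together with the continuity of both sides of \eqref{rep} in the $\B$-norm. For fixed $w\in\D$, the map $f\mapsto f(w)-f(0)$ is a bounded functional on $\B(\D)$, since point evaluations are bounded (by Proposition \ref{prp:B_prop}(a) and the inequality $\|f\|_\infty\le\|f\|_\B$), while $f\mapsto\langle\rho_w,f\rangle_\B$ is bounded by the estimate \eqref{pdc}, which gives $|\langle\rho_w,f\rangle_\B|\le\|\rho_w\|_{\E_0}\|f\|_{\B_0}$ with $\rho_w\in\E(\D)$. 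As both sides agree on the dense set of polynomials and depend continuously on $f$, they agree throughout $\B(\D)$.

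The only genuinely delicate point is the term-by-term manipulation of the series, and I sidestep it by carrying out the computation solely for polynomials, where every sum is finite; the density argument then supplies the general case without any convergence subtlety. The real content is therefore the clean cancellation of the $n^2$ factors against the log-weighted radial integral, which is precisely what forces the constant $2/\pi$ in \eqref{rep}.
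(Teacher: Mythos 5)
Your proposal is correct and is essentially the paper's own proof: the paper likewise reduces to polynomials (in fact to monomials $z^k$) by density and the continuity estimate \eqref{pdc}, then collapses the angular integral by orthogonality of the exponentials and uses the same radial integral $\int_0^1 r^{2k-1}\log(1/r)\,dr = 1/(4k^2)$ to cancel the $k^2$. One small correction: even for polynomial $f$ the factor $\rho_w'(re^{-i\theta})$ is still an infinite series, so the term-by-term integration in $\theta$ is justified not by finiteness but by uniform convergence of that series in $\theta$ for fixed $r\in(0,1)$ (which holds since $|w|r<1$, and is exactly the point the paper makes).
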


\begin{proof}
By density of the polynomials in $\B(\D)$, and linearity and continuity of the partial duality given by \eqref{pdc}, it suffices to establish \eqref{rep} for $f(w) := w^k$, where $k\in\N$.   The case $k=0$ is trivial.   If $k\ge1$, then $f(0)=0$ and
\begin{align*}
\langle \rho_w,f \rangle_{\B} &= \int_\D \log \frac{1}{|z|} \, kz^{k-1} w (1-w\overline{z})^{-2} \, dA(z) \\
&= kw \int_0^1 \int_{-\pi}^\pi \log \frac{1}{r} \, \sum_{n=0}^\infty (n+1) w^n r^{k+n} e^{i(k-1-n)\theta} \,d\theta \, dr.
\end{align*}
For fixed $r \in (0,1)$, the series converges uniformly in $\theta$, and the integral of the $n$th term with respect to $\theta$ is $0$ for each $n$ except $n = k-1$.  Thus
\[
\frac{2}{\pi} \langle \rho_w,f \rangle_{\B} = 4 k^2 w^k \int_0^1 \log  \frac{1}{r} \,r^{2k-1} \,dr = w^k,
\]
as required.
\end{proof}

Let $\W(\D)$ be the space of all continuous functions $g : \D \to \C$ such that
\[
\|g\|_\W :=\int_0^1 \sup_{|\theta|\le \pi} |g(\rit)| \, dr < \infty.
\]
For $g \in \W(\D)$, define
\[
(Qg)(w) := \frac{2}{\pi} \int_\D \log  \frac{1}{|z|} \frac {wg(z)}{(1-w\overline{z})^2} \, dA(z),  \qquad w \in \D.
\]
This integral is absolutely convergent, by essentially the same estimation as in \eqref{pdc} (with $g$ and $f'$ replaced by $\rho_w$ and $g$, respectively).   These definitions and Proposition \ref{dgrh} below are analogues of \cite[Proposition 3.1]{BGT2}.

If $f \in \B(\D)$, then $f' \in \W(\D)$ and \eqref{rep} may be rewritten as
\[
f = f(0) + Q(f').
\]
Thus $Q$ maps the space of holomorphic functions in $\W(\D)$ onto the space of functions $f \in \B(\D)$ with $f(0)=0$.  Our next result shows that $Q$ maps the whole space $\W(\D)$ into $\B(\D)$.

For $r \in (0,1)$ and $h \in L^p(-\pi,\pi)$, where $1 \le p \le \infty$, let
\begin{equation} \label{grh}
G_{r,h}(w) := \int_{-\pi}^{\pi} \frac {h(\theta)}{(1-wre^{-i\theta})^2} \, d\theta, \qquad w \in \D.
\end{equation}
If $g\in\W(\D)$ and $g_r(\theta):=g(re^{i\theta})$ for $r\in(0,1)$, $\theta\in[-\pi,\pi]$, then
\begin{equation} \label{eq:QG}
(Qg)(w)=\frac{2}{\pi} \int_0^1 r\log \frac{1}{r} \,w G_{r,g_r}(w) \, dr,  \qquad w \in \D.
\end{equation}
The functions $G_{r,h}$ correspond to the functions in \cite[Proposition 3.1]{BGT2}.  They belong to $\B(\D)$ and they will play an important role in Section~\ref{BC}.

\begin{prop} \label{dgrh}
\begin{enumerate}[\rm(a)]
\item  Let $r \in (0,1)$ and $h \in L^1(-\pi,\pi)$.  Then  $G_{r,h} \in \B(\D)$ and
\begin{equation} \label{B09}
G_{r,h} = \int_{-\pi}^\pi h(\theta)\rho_{r e^{-i\theta}}^2 \,d\theta,
\end{equation}
where the right-hand side exists as a $\B(\D)$-valued Bochner integral.
\item  Let $r \in (0,1)$ and $h\in L^\infty(-\pi,\pi)$.   Then $G_{r,h} \in \B(\D)$ and
\begin{equation}\label{B11}
\|G_{r,h}\|_{\Bes}\le \frac{6\pi}{1-r} \|h\|_\infty.
\end{equation}
\item $Q$ is a bounded linear operator from $\W(\D)$ to $\B(\D)$.
\end{enumerate}
\end{prop}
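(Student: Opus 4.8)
The plan is to handle the three parts in turn, using (a) to obtain membership in $\B(\D)$ and deriving the sharp quantitative bound in (b) by direct kernel estimates, since (a) alone is too weak for it.

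For (a), I would recognise the integrand as the $\B(\D)$-valued map $\theta\mapsto h(\theta)\rho_{re^{-i\theta}}^2$ and show the Bochner integral $\int_{-\pi}^\pi h(\theta)\rho_{re^{-i\theta}}^2\,d\theta$ exists in $\B(\D)$. For fixed $r\in(0,1)$ the points $re^{-i\theta}$ stay in the compact disc $\{|z|\le r\}\subset\D$, on which $w\mapsto\rho_w^2$ is continuous into the Banach algebra $\B(\D)$ (indeed Lipschitz, from $\rho_w-\rho_{w'}=(w-w')\,z\,\rho_w\rho_{w'}$); composing with $\theta\mapsto re^{-i\theta}$ and the measurable scalar $h$ gives a strongly measurable integrand. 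Computing $\|\rho_w^2\|_\B$ exactly as in Example \ref{res}(b) gives $\|\rho_{re^{-i\theta}}^2\|_\B=(1+2r-r^2)/(1-r)^2$, independent of $\theta$, so $\int_{-\pi}^\pi|h(\theta)|\,\|\rho_{re^{-i\theta}}^2\|_\B\,d\theta<\infty$ as $h\in L^1(-\pi,\pi)$, and the Bochner integral exists. Finally, $\|f\|_\infty\le\|f\|_\B$ by Proposition \ref{prp:B_prop}(b), so each point evaluation $f\mapsto f(w)$ is bounded on $\B(\D)$; applying it to the integral and interchanging it with the integral reproduces $G_{r,h}(w)$, identifying the integral with $G_{r,h}$ and proving $G_{r,h}\in\B(\D)$ and \eqref{B09}.

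For (b), membership follows from (a) since $L^\infty(-\pi,\pi)\subset L^1(-\pi,\pi)$. The point is that \eqref{B11}, of order $(1-r)^{-1}$, cannot come from the triangle inequality applied to the integral in (a), which gives only the weaker order $(1-r)^{-2}$; the remedy is to estimate $\|G_{r,h}\|_\infty$ and $\|G_{r,h}\|_{\B_0}$ separately while exploiting the averaging in $\theta$. For the supremum norm, $|G_{r,h}(w)|\le\|h\|_\infty\int_{-\pi}^\pi|1-wre^{-i\theta}|^{-2}\,d\theta$, so \eqref{valint} gives $\|G_{r,h}\|_\infty\le 2\pi(1-r^2)^{-1}\|h\|_\infty\le 2\pi(1-r)^{-1}\|h\|_\infty$. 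For the seminorm, differentiating under the integral yields $|G_{r,h}'(se^{i\psi})|\le 2r\|h\|_\infty\int_{-\pi}^\pi|1-sre^{i(\psi-\theta)}|^{-3}\,d\theta$, and by rotation invariance this angular integral is independent of $\psi$, so the supremum over $\psi$ is harmless. Using $|1-te^{-i\theta}|^{-3}\le(1-t)^{-1}|1-te^{-i\theta}|^{-2}$ with $t=sr$ and \eqref{valint} bounds the angular integral by $2\pi(1-sr)^{-2}(1+sr)^{-1}$; then $\int_0^1(1-sr)^{-2}\,ds=(1-r)^{-1}$ gives $\|G_{r,h}\|_{\B_0}\le 4\pi(1-r)^{-1}\|h\|_\infty$, and adding the two estimates yields \eqref{B11}.

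For (c), I would read \eqref{eq:QG} as the $\B(\D)$-valued Bochner integral
\[
Qg=\frac{2}{\pi}\int_0^1 r\log\frac{1}{r}\,\bigl(p_1\,G_{r,g_r}\bigr)\,dr,
\]
where $p_1(z)=z$ and the product is taken in the Banach algebra $\B(\D)$ (Proposition \ref{prp:B_prop}(c)), with $\|p_1\|_\B=2$. By (b), $\|G_{r,g_r}\|_\B\le 6\pi(1-r)^{-1}\sup_{|\theta|\le\pi}|g(re^{i\theta})|$, so the integrand has $\B$-norm at most $12\pi(1-r)^{-1}\sup_{|\theta|\le\pi}|g(re^{i\theta})|$; this bound (with continuity of $g$) also gives strong measurability and, since $g\in\W(\D)$, integrability. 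Passing norms through the Bochner integral,
\[
\|Qg\|_\B\le 24\int_0^1\frac{r\log(1/r)}{1-r}\,\sup_{|\theta|\le\pi}|g(re^{i\theta})|\,dr,
\]
and the elementary inequality $r\log(1/r)\le 1-r$ on $(0,1)$ (equivalently $r\log r-r+1\ge0$, which holds since this function vanishes at $r=1$ and is decreasing on $(0,1)$, its derivative being $\log r<0$) makes the weight at most $1$, so $\|Qg\|_\B\le 24\|g\|_\W$ and $Q$ is bounded. The main obstacle throughout is the estimate in (b): recognising that the Bochner representation of (a) loses a factor $(1-r)^{-1}$ and recovering it through rotation invariance and the reduction of the cubic kernel to the quadratic one evaluated in \eqref{valint}.
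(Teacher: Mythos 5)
Your proposal is correct, and while parts (a) and (b) follow the same skeleton as the paper's proof, it deviates in ways worth recording. In (a) you obtain strong measurability of $\theta\mapsto h(\theta)\rho_{re^{-i\theta}}^2$ from local Lipschitz continuity of $w\mapsto\rho_w^2$ (via $\rho_w-\rho_{w'}=(w-w')z\rho_w\rho_{w'}$), whereas the paper invokes holomorphy of $w\mapsto\rho_w^2$ as a $\B(\D)$-valued map (citing a vector-valued holomorphy criterion); both work, and yours is more elementary, as is your exact computation $\|\rho_w^2\|_\B=(1+2|w|-|w|^2)/(1-|w|)^2$ in place of the paper's submultiplicative bound $4(1-r)^{-2}$. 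In (b) you replace the paper's substitution computation \eqref{intest} (the changes of variables $u=\tan(\theta/2)$ and $v=(1+t)(1-t)^{-1}u$) by the pointwise bound $|1-te^{-i\theta}|\ge 1-t$, which reduces the cubic kernel to the quadratic one already evaluated in \eqref{valint} and yields the same bound $2\pi(1-t)^{-2}$ (indeed the slightly sharper $2\pi(1-t)^{-2}(1+t)^{-1}$); this is a genuine simplification with no loss, and your observation that the crude triangle-inequality estimate from (a) only gives order $(1-r)^{-2}$ correctly identifies why the direct derivative estimate is needed. The largest divergence is in (c): the paper does not use the full strength of (b) there, but instead estimates $(Qg)(w)$ and $(Qg)'(w)$ directly via \eqref{eq:G_inf}, \eqref{intest} and Fubini's theorem, arriving at $\|Qg\|_\B\le 16\|g\|_\W$; you instead read \eqref{eq:QG} as a $\B(\D)$-valued Bochner integral of the products $p_1G_{r,g_r}$ (with $p_1(z)=z$, $\|p_1\|_\B=2$) and apply (b) wholesale together with submultiplicativity of $\|\cdot\|_\B$, getting $\|Qg\|_\B\le 24\|g\|_\W$. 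Your route is cleaner and makes the dependence of (c) on (b) explicit, at the cost of a marginally larger constant (irrelevant for boundedness) and of one point you should spell out: strong measurability of $r\mapsto G_{r,g_r}$ as a $\B(\D)$-valued map. This does hold --- by the same argument as in your part (a), $r\mapsto G_{r,g_r}$ is in fact norm-continuous from $(0,1)$ into $\B(\D)$, since $g$ is uniformly continuous on compact annuli and $w\mapsto\rho_w^2$ is Lipschitz on compact subsets of $\D$ --- but your parenthetical ``(with continuity of $g$)'' leaves this step implicit.
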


\begin{proof}
(a)  From \eqref{Brho}, the map $w \mapsto \rho_w$ is locally bounded from $\D$ to the Banach algebra $\B(\D)$. For each $z\in \D$, the point evaluation $f \mapsto f(z)$ is a bounded linear functional on $\B(\D)$, and the function $w \mapsto \rho_w(z)^2$ is holomorphic on $\D$.  By \cite[Corollary A.7]{ABHN} the function $w\mapsto \rho_w^2$ is holomorphic as a map from $\D$ to $\B(\D)$.   Moreover, $\|\rho_{r e^{-i\theta}}^2\|_\B \le 4(1-r)^{-2}$ by~\eqref{Brho}.    For $h \in L^1(-\pi,\pi)$, it follows that
\[
J_{r,h} := \int_{-\pi}^\pi h(\theta) \rho_{r e^{-i\theta}}^2 \, d\theta
\]
exists as a $\B(\D)$-valued Bochner integral, and
\[
J_{r,h}(w) = \int_{-\pi}^\pi h(\theta) \rho_{r e^{-i\theta}}(w)^2 \, d\theta = G_{r,h}(w),\qquad w\in\D,
\]
as required.

\noindent
(b)  For $h \in L^\infty(-\pi,\pi)$, it follows from (a) that $G_{r,h}$ is holomorphic on $\D$. For $w\in\D$,
\[
|G'_{r,h}(w)|  = \bigg|2r \int_{-\pi}^\pi \frac{h(\theta) e^{-i\theta}}{(1-wr e^{-i\theta})^3} \,d\theta\bigg|\le 2r \|h\|_\infty  \int_{-\pi}^\pi \frac{d\theta}{|1-wre^{-i\theta}|^3}.
\]
If $w \in \D$ and  $t := |w|r$, then 
\begin{equation} \label{intest}
\begin{aligned}
 \int_{-\pi}^\pi \frac{d\theta}{|1-wre^{-i\theta}|^3} 
 &=  2 \int_0^\pi \frac{d\theta}{(1+t^2-2t \cos\theta)^{3/2}} \\
 &= 4 \int_\R \frac{(1+u^2)^{1/2}} {\left((1-t)^2 + (1+t)^2u^2\right)^{3/2}}  \, du  \\
 &= \frac{4}{(1-t)^2(1+t)} \int_0^\infty  {\bigg( 1+ \left(\frac{1-t}{1+t}\right)^2  v^2\bigg)^{1/2} } \frac{dv}{(1+v^2)^{3/2}} \\
 &\le \frac{4}{(1-t)^2} \int_0^\infty \frac{dv}{1+v^2} = \frac{2\pi}{(1-t)^2}, 
\end{aligned}\end{equation}
where we have made the substitutions $u = \tan (\theta/2)$ and $v = (1+t)(1-t)^{-1}u$.   
 It follows that 
\begin{equation}\label{eq:G_B0}
\|G_{r,h}\|_\Bq \le 4 \pi  r \|h\|_\infty \int_0^1 \frac{ds}{(1-sr)^2} = \frac{4 \pi r}{1-r} \|h\|_\infty\le  \frac{4 \pi }{1-r} \|h\|_\infty.
\end{equation}
From \eqref{valint}, or by using the same substitutions as above, we see that
\begin{equation}\label{eq:G_inf}
\|G_{r,h}\|_\infty \le \sup_{|w|<1} \int_{-\pi}^\pi \frac{\|h\|_\infty}{\left|1-wre^{-i\theta}\right|^2} \, d\theta =  \frac {2\pi}{1-r^2}\|h\|_\infty  \le \frac{2\pi}{1-r} \|h\|_\infty,
\end{equation}
and now \eqref{B11} follows from \eqref{eq:G_B0} and \eqref{eq:G_inf}.

\noindent
(c) Let $g\in\W(\D)$.  It is not difficult to see, for instance using Morera's theorem, that $Qg$ is holomorphic on $\D$,  and  $Q$ is certainly linear. Let  $w\in\D$ and  $g_r(\theta):=g(re^{i\theta})$ for $r\in(0,1)$, $\theta\in[-\pi,\pi].$   Using $r\log r^{-1}\le 1-r$ for $r\in(0,1)$, \eqref{eq:QG} and \eqref{eq:G_inf}, we see that
\begin{equation}\label{eq:G_est}
\begin{aligned}
|(Qg)(w)|&=\frac{2|w|}{\pi}\left|\int_0^1 r\log\frac1r\, G_{r,g_r}(w) \dd r\right|\\&\le4\int_0^1\frac{r\log r^{-1}}{1-r}\|g_r\|_\infty\, dr\le4\|g\|_\W,
\end{aligned}
\end{equation}
and hence $\|Qg\|_\infty\le4\|g\|_\W$.

Next we note that
$$(Qg)'(w)=\frac{2}{\pi}\int_0^1 r\log\frac1r\left( G_{r,g_r}(w)+\int_{-\pi}^\pi\frac{2wre^{-i\theta}g(re^{i\theta})}{(1-wre^{-i\theta})^3}\dd\theta\right)d r.
$$
Using  the same estimate as in \eqref{eq:G_est}, and \eqref{intest}, we obtain
$$|(Qg)'(w)|\le4\|g\|_\W+8\int_0^1\frac{r^2\log r^{-1}}{(1-|w|r)^2}\|g_r\|_\infty\dd r,$$
and, using Fubini's theorem, it follows that
$$\begin{aligned}
\|Qg\|_{\B_0}&\le 4\|g\|_\W+8\int_0^1r^2\log\frac1r\,\|g_r\|_\infty\int_0^1\frac{d s}{(1-sr)^2}\dd r\\&
\le 4\|g\|_\W+8\int_0^1\frac{r\log r^{-1}}{1-r}\|g_r\|_\infty\dd r
\le 12\|g\|_\W.
\end{aligned}$$
Hence $Qg\in\B(\D)$ and $\|Qg\|_\B\le 16\|g\|_\W$, so $Q$ maps $\W(\D)$ boundedly into $\B(\D)$, as required.
\end{proof}

\section{The $\B$-calculus}  \label{BC}

Let $T$ be a bounded operator on a complex Banach space $X$ with spectral radius at most $1$.   In this section we show that a resolvent condition on $T$ is equivalent to $T$ having a bounded $\B$-calculus.   It is a discrete analogue of a condition on unbounded operators which was introduced by Gomilko \cite{Gom99} and independently by Shi and Feng \cite{SF00}, and which is equivalent to the existence of a bounded $\B(\C_+)$-calculus for some classes of unbounded operators.    

For $x \in X$ and $x^* \in X^*$, we define
\[
g_{x,x^*}(z) := \langle (I-zT)^{-1}x,x^* \rangle, \qquad z \in \D.
\]
We say that $T$ satisfies the {\it discrete Gomilko-Shi-Feng condition} if $g_{x,x^*} \in \E(\D)$ for all $x\in X$ and $x^*\in X^*$,  that is,
\begin{equation} \label{dgsf}
\sup_{0<r<1} (1-r) \int_{-\pi}^\pi \big| \langle T(I-\rit T)^{-2}x,x^* \rangle \big| \,d\theta < \infty.
\end{equation}
As noted in Section \ref{BD} we may take the supremum over $r\in(a,1)$ where $a \in (0,1)$, instead of over $r\in(0,1)$, or we may replace the factor $1-r$ by $r \log r^{-1}$.   In addition, the Closed Graph Theorem implies that there exists a finite constant $\gamma_T$ such that the supremum in \eqref{dgsf} is bounded by $\gamma_T \|x\| \,\|x^*\|$ for all $x \in X$ and $x^* \in X^*$.

It is easy to see that the condition \eqref{dgsf} is equivalent to property (GFS)$_1$ in \cite[Definition 2.2]{Arn21} and to the property (2.5) in \cite{CG08}.     We shall call any of these equivalent properties the {(dGSF) condition}.   It was shown in \cite[Lemma 2.1]{CG08} that the (dGSF) condition implies that $T$ is power-bounded. 

We now obtain some other variations of \eqref{dgsf}.

\begin{prop}  \label{vdgsf}
Let $T$ be a power-bounded operator on a complex Banach space $X$. The following are equivalent:
\begin{enumerate} [\rm(i)]
\item $T$ satisfies \eqref{dgsf} for all $x\in X$ and $x^*\in X^*$.
\item For all $x\in X$ and $x^*\in X^*$,
\begin{equation} \label{dgsf2}
\sup_{0<r<1} (1-r) \int_{-\pi}^\pi \big| \langle (I-\rit T)^{-2}x,x^* \rangle \big| \,d\theta < \infty.
\end{equation}
\item  For all $x \in X$ and $x^* \in X^*$,
\begin{equation} \label{dgsf3}
\sup_{r>1} \, (r-1) \int_{-\pi}^\pi \big| \langle (\rit I- T)^{-2}x,x^* \rangle \big| \,d\theta < \infty.
\end{equation}
\end{enumerate}
\end{prop}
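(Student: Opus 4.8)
The plan is to prove the two equivalences (i) $\Leftrightarrow$ (ii) and (ii) $\Leftrightarrow$ (iii) separately, since each rests on a different elementary manipulation. Throughout I fix $x \in X$ and $x^* \in X^*$ and write $w = re^{i\theta}$ with $r \in (0,1)$. Because $T$ is power-bounded its spectral radius is at most $1$, so $(I-wT)^{-1}$ exists for $|w| < 1$ and $(\lambda I - T)^{-1}$ exists for $|\lambda| > 1$; hence every integrand below is well defined. To avoid a clash of notation I shall write $s$ for the variable ranging over $(1,\infty)$ in \eqref{dgsf3} (called $r$ there), reserving $r$ for values in $(0,1)$.

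For (i) $\Leftrightarrow$ (ii) the key is the algebraic identity
\[
(I-wT)^{-2} = (I-wT)^{-1} + wT(I-wT)^{-2},
\]
obtained by multiplying on the right by $(I-wT)^2$. Pairing with $x$ and $x^*$, the difference between the integrand of \eqref{dgsf2} and $w$ times the integrand of \eqref{dgsf} is exactly $\langle (I-wT)^{-1}x, x^*\rangle$. Power-boundedness, say $\|T^n\| \le M$, gives $\|(I-wT)^{-1}\| \le M/(1-r)$ via the Neumann series, so
\[
(1-r)\int_{-\pi}^\pi \big|\langle (I-wT)^{-1}x, x^*\rangle\big| \, d\theta \le 2\pi M \|x\|\,\|x^*\|
\]
uniformly in $r \in (0,1)$. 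Thus the suprema in \eqref{dgsf} and \eqref{dgsf2} differ only by this uniformly bounded term, together with the harmless bounded factor $|w| = r$ in front of the \eqref{dgsf}-integrand, so one is finite if and only if the other is.

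For (ii) $\Leftrightarrow$ (iii) I would use the substitution $\lambda = se^{i\theta}$ with $s > 1$ and set $r = 1/s \in (0,1)$. Writing $(\lambda I - T)^{-1} = \lambda^{-1}(I - \lambda^{-1}T)^{-1}$ with $\lambda^{-1} = re^{-i\theta}$ and squaring gives
\[
(\lambda I - T)^{-2} = r^2 e^{-2i\theta}\,(I - re^{-i\theta}T)^{-2}.
\]
Taking absolute values, integrating over $\theta \in [-\pi,\pi]$, and using the symmetry of the integral under $\theta \mapsto -\theta$, together with $s^{-2} = r^2$ and $s - 1 = (1-r)/r$, yields the exact identity
\[
(s-1)\int_{-\pi}^\pi \big|\langle (\lambda I - T)^{-2}x, x^*\rangle\big| \, d\theta = (1-r)\,r \int_{-\pi}^\pi \big|\langle (I - re^{i\theta}T)^{-2}x, x^*\rangle\big| \, d\theta .
\]
As $s$ runs over $(1,\infty)$ the value $r = 1/s$ runs over $(0,1)$, so the supremum in \eqref{dgsf3} equals the supremum over $r \in (0,1)$ of the right-hand side.

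Finally, to pass from this last supremum to \eqref{dgsf2}, I note that $\int_{-\pi}^\pi |\langle (I-re^{i\theta}T)^{-2}x, x^*\rangle|\,d\theta$ is finite and continuous on $[0,1)$, so the only possible source of unboundedness is the limit $r \to 1-$, where the extra factor $r$ tends to $1$; hence the supremum with the factor $r$ is finite if and only if the one without it is, i.e. if and only if \eqref{dgsf2} holds. I expect the substitution and the resolvent identity themselves to be entirely routine; the only mildly delicate point is this bookkeeping with the factor $r$, which is handled by restricting each supremum to a neighbourhood of $r=1$ (equivalently $s=1$), as already noted after \eqref{dgsf}.
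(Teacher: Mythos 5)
Your proposal is correct and follows essentially the same route as the paper: the equivalence of (i) and (ii) rests on the identity $(I-wT)^{-2}=(I-wT)^{-1}+wT(I-wT)^{-2}$ together with the Neumann-series bound $\|(I-wT)^{-1}\|\le M/(1-r)$, and the equivalence of (ii) and (iii) follows from the substitution $r\mapsto r^{-1}$, $\theta\mapsto-\theta$. The only cosmetic difference is that the paper renorms so that $\|T\|\le 1$ and absorbs the modulus factor by writing $wT=e^{i\theta}T-(1-r)e^{i\theta}T$, whereas you pull out $|w|=r$ and restrict the suprema to $r$ near $1$; both handle the same bookkeeping point correctly.
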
 

\begin{proof}
By passing to an equivalent norm on $X$, we may assume that $\|T\|\le1$, so that $\|(I - \rit T)^{-1} \| \le (1-r)^{-1}$ for $r \in (0,1)$.

Let  $x \in X$, $x^* \in X^*$ and $r \in (0,1)$.   Then
\begin{align*}
\lefteqn {\left| \int_{-\pi}^\pi \big| \langle (I-\rit T)^{-2}x,x^* \rangle \big| \,d\theta -
 \int_{-\pi}^\pi \big| \langle  T (I-\rit T)^{-2}x,x^* \rangle \big| \,d\theta  \right|} \\
 &\le \int_{-\pi}^\pi \big| \big\langle \big( (1-\rit T)^{-2} -  \rit T (1-\rit T)^{-2} \big) x,x^* \big\rangle \big| \,d\theta \\
&\null\hskip20pt
 +  \int_{-\pi}^\pi\big| \langle (1-r) e^{i\theta} T (1 - \rit T)^{-2}x,x^* \rangle  \big| \,d\theta \\
 &\le \int_{-\pi}^\pi \big| \langle (I-\rit T)^{-1}x, x^* \rangle \big| \, d\theta  + (1-r) \int_{-\pi}^\pi\big| \langle  T (I - \rit T)^{-2}x,x^* \rangle  \big| \,d\theta\\
& \le 2\int_{-\pi}^\pi (1-r)^{-1} \|x\|\,\|x^*\| \, d\theta  = \frac{4\pi}{1-r} \|x\|\,\|x^*\|.
\end{align*}
The equivalence of (i) and (ii) follows immediately from this.

The equivalence of (ii) and (iii) follows easily on replacing $r$ by $r^{-1}$ and $\theta$ by $-\theta$ in the integrals.
\end{proof}

Now assume that $T$ satisfies \eqref{dgsf}.  Let $f \in \B(\D)$,  and define
\begin{equation} \label{dBfc}
\langle f(T)x, x^* \rangle := f(0) \langle x,x^* \rangle +\frac2\pi \langle g_{x,x^*}, f \rangle_\B, \qquad x\in X, \, x^*\in X^*.
\end{equation}
This formula is of the same format as \cite[(3.4)]{Arn21}, although the kernels in the double integrals are different.   The following theorem is essentially the same as \cite[Theorem 3.2]{Arn21}.

\begin{thm} \label{dbfc}
Let $T$ be a power-bounded operator on a Banach space $X$.   If $T$ satisfies the {\rm (dGSF)} condition, then $f \mapsto f(T)$ is a $\B$-calculus for the operator $T$.   Moreover,
\begin{equation}  \label{gtbd}
\|f(T)\| \le \max \left\{1,\frac{2\gamma_T}{\pi}\right\} \|f\|_\B, \qquad f \in \B(\D).
\end{equation}
\end{thm}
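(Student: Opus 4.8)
The plan is to read \eqref{dBfc} first as a bounded bilinear form on $X\times X^*$, to identify it with a genuine operator in $L(X)$ by approximation with polynomials, and then to transfer multiplicativity and the normalisation from the polynomial case by density. To begin, I would record the norm estimate. Since $g_{x,x^*}'(z)=\langle T(I-zT)^{-2}x,x^*\rangle$, the (dGSF) condition gives $\|g_{x,x^*}\|_{\E_0}\le\gamma_T\|x\|\,\|x^*\|$. Combining $|f(0)|\le\|f\|_\infty$ with the duality bound \eqref{pdc} applied to $\langle g_{x,x^*},f\rangle_\B$ yields
\[
|\langle f(T)x,x^*\rangle|\le\Big(\|f\|_\infty+\tfrac{2}{\pi}\gamma_T\|f\|_{\B_0}\Big)\|x\|\,\|x^*\|\le\max\Big\{1,\tfrac{2\gamma_T}{\pi}\Big\}\|f\|_\B\,\|x\|\,\|x^*\|,
\]
using $\|f\|_\B=\|f\|_\infty+\|f\|_{\B_0}$. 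Thus for each $f$ the right-hand side of \eqref{dBfc} is a bounded bilinear form, equivalently an element of $L(X,X^{**})$ with norm at most $\max\{1,2\gamma_T/\pi\}\|f\|_\B$.

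Next I would check that on monomials this form is represented by the expected operator. For $f(z)=z^k$ with $k\ge1$ one has $f(0)=0$, and substituting the power series $g_{x,x^*}(z)=\sum_{n\ge0}\langle T^nx,x^*\rangle z^n$ into the pairing and integrating in $\theta$ first (the series converging uniformly on each circle $|z|=r$) leaves only the $n=k$ term; the remaining radial integral $\int_0^1 r^{2k-1}\log r^{-1}\,dr=(2k)^{-2}$ gives $\tfrac{2}{\pi}\langle g_{x,x^*},z^k\rangle_\B=\langle T^kx,x^*\rangle$. This is precisely the calculation in the proof of Proposition \ref{repf}. Together with the trivial case $k=0$ and linearity, it shows that \eqref{dBfc} coincides with $\langle p(T)x,x^*\rangle$ for every polynomial $p$, where $p(T)$ is the usual operator; in particular the form sends $1$ to $I$ and $z$ to $T$.

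The decisive step is to pass from polynomials to all of $\B(\D)$. Given $f\in\B(\D)$, I would choose polynomials $p_n\to f$ in $\|\cdot\|_\B$ (Proposition \ref{prp:B_prop}(f)). Applying the norm bound to $p_n-p_m$ shows $(p_n(T))$ is Cauchy in $L(X)$ and hence converges in operator norm to some $\Phi(f)\in L(X)$; applying it instead to $f-p_n$ shows that \eqref{dBfc} is continuous in $f$, so $\langle f(T)x,x^*\rangle=\lim_n\langle p_n(T)x,x^*\rangle=\langle\Phi(f)x,x^*\rangle$. Therefore the bilinear form \eqref{dBfc} is represented by the bounded operator $f(T):=\Phi(f)$, and \eqref{gtbd} is inherited from the norm bound. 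Linearity is clear, and multiplicativity follows by density: if $p_n\to f$ and $q_n\to g$ then $p_nq_n\to fg$ because $\B(\D)$ is a Banach algebra, and letting $n\to\infty$ in $(p_nq_n)(T)=p_n(T)q_n(T)$ gives $(fg)(T)=f(T)g(T)$. Uniqueness of the calculus is immediate, since any bounded homomorphism sending $z$ to $T$ must agree with $f\mapsto f(T)$ on the dense set of polynomials.

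I expect the main obstacle to be the identification of the a priori bilinear form — which only lives in $L(X,X^{**})$ — with an operator in $L(X)$. This is exactly what the polynomial approximation resolves, and the monomial computation reproducing $T^k$, mirroring Proposition \ref{repf}, is the technical heart of the argument; the remaining verifications are routine once the norm bound and the polynomial identity are in hand.
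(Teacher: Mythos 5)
Your proposal is correct and takes essentially the same route as the paper's proof: read \eqref{dBfc} as a bounded bilinear form (an element of $L(X,X^{**})$ with norm at most $\max\{1,2\gamma_T/\pi\}\|f\|_\B$), verify on monomials $z^k$ via exactly the computation of Proposition \ref{repf} that the form is represented by $T^k$, and then use density of the polynomials in $\B(\D)$ to conclude that $f(T)\in L(X)$ and that $f\mapsto f(T)$ is a homomorphism satisfying \eqref{gtbd}. Your explicit Cauchy-sequence argument merely spells out the density/continuity step that the paper states more briefly.
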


\begin{proof}
It is clear from \eqref{pdc} and the finiteness of $\gamma_T$ that $f(T)$ is a bounded linear operator from $X$ into $X^{**}$ for every $f\in\B(\D)$.   We need to show that $f(T)$ maps $X$ into $X$, and $f \mapsto f(T)$ is an algebra homomorphism mapping $1$ to $I$ and $z$ to $T$.    By the density of the polynomials in $\B(\D)$ and linearity, it suffices to show that if $f(z)=z^k$ for some $k\in\N$, then  $f(T) = T^k$.    The proof is almost identical to that of Proposition \ref{repf}, with $w$ replaced by $T$, and $x$ and $x^*$ inserted. 

 The case $k=0$ is trivial.   For $k\ge1$, we have $f(0)=0$.  For $x \in X$ and $x^* \in X^*$,
\begin{align*}
\langle f(T)x&,x^*\rangle = \frac{2}{\pi} \int_\D \log \frac{1}{|z|} \, kz^{k-1} \langle T (I-\overline{z}T)^{-2}x, x^* \rangle  \, dA(z) \\
&= \frac{2k}{\pi} \int_0^1 \int_{-\pi}^\pi \log \frac{1}{r} \,  \sum_{n=0}^\infty (n+1) \langle T^{n+1}x, x^* \rangle r^{k+n} e^{i(k-1-n)\theta} \,d\theta \, dr.
\end{align*}
For fixed $r \in (0,1)$, the series converges uniformly in $\theta$, and the integral of the $n$th term with respect to $\theta$ is $0$ for each $n$ except $n = k-1$.  Thus
\[
\langle f(T)x, x^*  \rangle = 4 k^2 \langle T^k x,x^* \rangle \int_0^1 \log \frac{1}{r} \, r^{2k-1} \,dr = \langle T^k x,x^* \rangle.
\]
It follows that the map $f \mapsto f(T)$ is an algebra homomorphism from the polynomials into $L(X)$, so the map $f \mapsto f(T)$ is a $\B$-calculus for  $T$.  The estimate \eqref{gtbd} follows from \eqref{dBfc}, \eqref{pdc}, and the definition of $\gamma_T$.
\end{proof}

The $\B$-calculus defined by \eqref{dBfc} is the \emph{unique} $\B$-calculus for $T$, since the the polynomials are dense in $\B(\D)$.   The $\B$-calculus can also be described in the following ways, where we write $r(T)$ for the spectral radius of $T$.

\begin{cor} \label{frt}
Let $T$ be a power-bounded operator satisfying the {\rm(dGSF)} condition, and let $f$ be a holomorphic function on $\D$ with Taylor series $f(z) = \sum_{n=0}^\infty a_nz^n$.   Then the following statements hold in the operator-norm topology:
\begin{enumerate}[\rm(a)]
\item  If $f \in \B(\D)$, then $f(T) = \lim_{r \to 1-} \sum_{n=0}^\infty a_n r^n T^n.$
\item  If $f \in \B(\D)$ and $r(T)<1$, then $f(T) = \sum_{n=0}^\infty a_n T^n$.
\end{enumerate}
\end{cor}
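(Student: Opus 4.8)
The plan is to deduce both statements from the continuity of the $\B$-calculus established in Theorem \ref{dbfc}, together with the approximation property in Proposition \ref{prp:B_prop}(e).

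For part (a), I would first observe that for each fixed $r \in (0,1)$ the dilate $f_r$ belongs to $W^+(\D)$ by Proposition \ref{prp:B_prop}(e), with absolutely convergent power series $f_r(z) = \sum_{n=0}^\infty a_n r^n z^n$. The partial sums of this series converge to $f_r$ in the Wiener norm, hence in the $\B$-norm by the continuous inclusion $W^+(\D) \hookrightarrow \B(\D)$ recorded in Proposition \ref{prp:B_prop}(d). Since the $\B$-calculus is a bounded algebra homomorphism sending $1$ to $I$ and $z$ to $T$, applying it to these partial sums and passing to the limit gives $f_r(T) = \sum_{n=0}^\infty a_n r^n T^n$ in operator norm. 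Finally, Proposition \ref{prp:B_prop}(e) supplies $\|f_r - f\|_\B \to 0$ as $r \to 1-$, so the norm estimate \eqref{gtbd} yields $\|f_r(T) - f(T)\| \to 0$, which is exactly the claimed identity.

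For part (b), I would use the hypothesis $r(T) < 1$ to control the tail of the series directly. Since $f \in \B(\D) \subset A(\D)$, the Cauchy estimates give $|a_n| \le \|f\|_\infty$ for all $n$, while the spectral radius formula provides some $\rho \in (r(T),1)$ and a constant $C$ with $\|T^n\| \le C\rho^n$ for all $n$. Hence $\sum_{n=0}^\infty |a_n|\,\|T^n\| \le C\|f\|_\infty \sum_{n=0}^\infty \rho^n < \infty$, so $\sum_{n=0}^\infty a_n T^n$ converges absolutely in operator norm. To identify its sum with $f(T)$, I would invoke part (a) and interchange the limit $r \to 1-$ with the summation: the bound $\|a_n r^n T^n\| \le |a_n|\,\|T^n\| \le C\|f\|_\infty \rho^n$ is uniform in $r \in (0,1)$ and summable, so dominated convergence for series in $L(X)$ gives $\lim_{r\to 1-}\sum_{n=0}^\infty a_n r^n T^n = \sum_{n=0}^\infty a_n T^n$, and therefore $f(T) = \sum_{n=0}^\infty a_n T^n$.

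I expect the only delicate point to be the identification $f_r(T) = \sum_{n=0}^\infty a_n r^n T^n$ in part (a), which rests on the compatibility of the $\B$-calculus with the elementary power-series calculus on $W^+(\D)$ via the continuous inclusion and the continuity estimate \eqref{gtbd}. Once this is in place, both the passage to the limit in (a) and the dominated-convergence interchange in (b) are routine.
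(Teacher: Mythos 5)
Your proof is correct and follows essentially the same route as the paper: approximate $f$ by the dilates $f_r$ via Proposition \ref{prp:B_prop}(e), identify $f_r(T)$ with $\sum_{n=0}^\infty a_n r^n T^n$, pass to the limit using the boundedness of the $\B$-calculus, and deduce (b) from (a). You merely spell out two steps the paper leaves implicit, namely the justification of $f_r(T)=\sum_{n=0}^\infty a_n r^n T^n$ via partial sums and the continuous inclusion $W^+(\D)\hookrightarrow\B(\D)$, and the dominated-convergence interchange when $r(T)<1$; both are done correctly.
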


\begin{proof}
Let $f\in\B(\D)$ and $f_r(z):=f(rz)$ for $r\in(0,1)$, $z\in\D$. By Proposition~\ref{prp:B_prop}(e), $f_r\in W^+(\D)$ and $\lim_{r\to1-}\|f_r-f\|_\B=0$. Now $f_r(T)=\sum_{n=0}^\infty a_n r^n T^n$ for every $r\in(0,1)$ and, since $T$ has a bounded $\B$-calculus by Theorem~\ref{dbfc}, $f(T)=\lim_{r\to1-}f_r(T)$ in operator norm. Thus (a) follows, which in turn implies~(b).
\end{proof}

Since $\|\cdot\|_\B$ is dominated by $\|\cdot\|_W$ on $W^+(\D)$, the estimate \eqref{gtbd} is generally sharper than the estimate $\|f(T)\| \le \sup_{n\in\Z_+} \|T^n\|\, \|f\|_W$ when $f \in W^+(\D)$.

\begin{exas}  \label{PelVit}
(a)   Let $T$ be a power-bounded operator on a Hilbert space.   It is shown in \cite[Corollary 2.5]{CG08}, and also in \cite[Proposition 6.4.4]{Sei14} and \cite[Corollary 2.13]{Arn21}, that \eqref{dgsf3} holds.   The argument uses Parseval's theorem.   
It follows from Proposition~\ref{vdgsf} and Theorem \ref{dbfc} that $T$ has a bounded $\B$-calculus.  As stated in the introduction, this fact follows from estimates of Peller \cite{Pel82} for polynomial functions of $T$, but no explicit formula for $f(T)$ is given there.

There exist power-bounded operators on Hilbert spaces which are not polynomially bounded; see \cite{Leb68}, for example. Such operators have a bounded $\B$-calculus, but they do not have a bounded $A(\D)$-calculus.    This confirms that $\B(\D)$ is properly included in $A(\D)$. 

\noindent 
(b)  Let $T$ be in the class of Ritt operators, sometimes known as Tadmor-Ritt operators.   This means that $T$ is a bounded linear operator on a Banach space,  $r(T)\le 1$ and there is a constant $C$ such that
\[
\|(zI-T)^{-1}\| \le C |z-1|^{-1}, \qquad |z|>1.
\]
It was proved in \cite{Kom68} and \cite[Theorem~4.5.4]{Nev93} that $T$ is power-bounded; see also~\cite{Bak88}.   Further results considering polynomial boundedness and functional calculus on Stolz domains contained in $\D$ were obtained in \cite{LL15} and \cite{LeM14}.  In various respects Ritt operators are analogues of generators of bounded holomorphic $C_0$-semigroups.

For $r>1$, we have
\[
\int_{-\pi}^\pi \|(re^{i\theta}I - T)^{-2}\| \, d\theta \le \frac{C}{r^2} \int_{-\pi}^\pi \frac{ d\theta}{|1 - r^{-1}e^{-i\theta}|^2} = \frac{2\pi C}{r^2 (1-r^{-1})} \le \frac{2\pi C}{r-1}, 
\]
and hence  \eqref{dgsf3} holds.  
It follows from Proposition~\ref{vdgsf} and Theorem \ref{dbfc} that $T$ has a bounded $\B$-calculus.  In this case, the definition of $f(T)$ in \eqref{dBfc} can be strengthened to the operator-valued formula
\begin{equation} \label{ritt}
f(T) = f(0)I + \frac{2}{\pi} \int_\D \log \frac{1}{|z|}\, f'(z) T (I- \overline{z}T)^{-2} \, dA(z).
\end{equation}
As stated in the introduction, Vitse proved the existence of a bounded $\B$-calculus for Ritt operators in \cite{Vit05}, based on estimates for polynomial functions of $T$, but no explicit formula for $f(T)$ is given in that paper.
\end{exas}

Next we will obtain a converse of Theorem \ref{dbfc}, by showing that if $T$ has a bounded $\B$-calculus, then (dGSF) holds.    The argument for this is similar to the continuous-parameter case \cite[Theorem 6.1]{BGT2}, but it is more direct than the proof of the same result in \cite[Theorem 3.5]{Arn21}.   The result will be used in the proof of Theorem \ref{KTB}.

\begin{thm} \label{cfbd}
Let $T$ be a bounded operator on a  Banach space $X$.   If $T$ admits a bounded $\B$-calculus, then $T$ is power-bounded and $T$ satisfies the {\rm(dGSF)} condition.   
\end{thm}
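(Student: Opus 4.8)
The plan is to feed the explicit test functions $\rho_w$ and $G_{r,h}$ from Section~\ref{BD} into the given bounded calculus, which I write as $f \mapsto f(T)$ with $\|f(T)\| \le C\|f\|_\B$ for all $f \in \B(\D)$, and then to read off the resolvent condition by a duality argument.

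First I would establish power-boundedness, which is immediate: since $z^n \in \B(\D)$ maps to $T^n$, and a one-line computation gives $\|z^n\|_\infty = 1$ and $\|z^n\|_{\B_0} = \int_0^1 n r^{n-1}\,dr = 1$, we get $\|T^n\| \le C\|z^n\|_\B = 2C$ for all $n$. In particular $r(T) \le 1$, so $(I - wT)^{-1}$ exists for every $w \in \D$. Next I would identify the calculus on the relevant functions. For $|w|<1$ the series $\rho_w = \sum_{n\ge0} w^n z^n$ converges in $W^+(\D)$, hence in $\B(\D)$ since $\|\cdot\|_\B \le \|\cdot\|_W$ there; continuity and multiplicativity of the calculus then give $\rho_w(T) = (I - wT)^{-1}$ and $\rho_w^2(T) = (I - wT)^{-2}$. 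Feeding this into the Bochner-integral representation \eqref{B09} of $G_{r,h}$, and using that a bounded operator commutes with Bochner integrals, I obtain for $r \in (0,1)$ and $h \in L^\infty(-\pi,\pi)$ the identity
\[
G_{r,h}(T) = \int_{-\pi}^\pi h(\theta)\,(I - re^{-i\theta}T)^{-2}\,d\theta .
\]

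The decisive step is then to extract the (dGSF) integral by duality. Fixing $x \in X$, $x^* \in X^*$ and $r \in (0,1)$, I set $\phi(\theta) := \langle (I - re^{-i\theta}T)^{-2}x, x^*\rangle$, which is continuous in $\theta$ because the spectral radius of $re^{-i\theta}T$ is $r\,r(T) \le r < 1$. Choosing the unimodular test function $h(\theta) := \overline{\phi(\theta)}/|\phi(\theta)|$ (and $0$ where $\phi = 0$) gives $\|h\|_\infty \le 1$ and $\langle G_{r,h}(T)x, x^*\rangle = \int_{-\pi}^\pi |\phi(\theta)|\,d\theta$, while the calculus bound combined with \eqref{B11} gives $\|G_{r,h}(T)\| \le 6\pi C/(1-r)$. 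Hence
\[
(1-r)\int_{-\pi}^\pi \big|\langle (I - re^{-i\theta}T)^{-2}x, x^*\rangle\big|\,d\theta \le 6\pi C\,\|x\|\,\|x^*\|,
\]
and after the harmless substitution $\theta \mapsto -\theta$ and taking the supremum over $r$ this is exactly \eqref{dgsf2}. Proposition~\ref{vdgsf} then converts \eqref{dgsf2} into the (dGSF) condition, completing the proof.

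I expect the only genuinely delicate point to be this last duality step: the content of the theorem is precisely that an operator-norm bound on $G_{r,h}(T)$ can be upgraded to a scalar estimate with the modulus \emph{inside} the integral, and it is the measurability of the unimodular $h$ together with the pointwise identity $h\phi = |\phi|$ that makes this upgrade legitimate. Everything else — the power-bound computation, the identification $\rho_w^2(T) = (I-wT)^{-2}$, and the interchange of the calculus with the Bochner integral — is routine.
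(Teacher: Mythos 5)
Your proposal is correct and follows essentially the same route as the paper's proof: power-boundedness from $\|z^n\|_\B = 2$, the identification $\Phi(\rho_{re^{-i\theta}}^2) = (I-re^{-i\theta}T)^{-2}$ fed into the Bochner integral \eqref{B09}, the bound \eqref{B11}, and finally Proposition~\ref{vdgsf}. The only (immaterial) differences are that the paper obtains $\rho_w(T)=(I-wT)^{-1}$ from multiplicativity via $\rho_w(z)(1-wz)=1$ rather than from the $W^+(\D)$ series, and it runs the final duality step with continuous test functions $h$ of sup-norm one rather than your explicit measurable unimodular $h$; both are legitimate since Proposition~\ref{dgrh} covers all $h\in L^\infty(-\pi,\pi)$.
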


\begin{proof}
Let  $\Phi: \B(\D) \to L(X)$ be a bounded algebra homomorphism with $\Phi(1)= I$ and $\Phi(z) = T$.   The functions $z^k$ for $k\ge1$ all have $\B$-norm equal to 2, so the operators $T^k$ are bounded uniformly in $k$.   Let $w \in \D$.  Then $\rho_w(z) (1-{w}z) = 1$ for all $z\in\D$, and hence $\Phi(\rho_w) = (I-{w}T)^{-1}$.

 Let $h$ be a continuous function on $[-\pi,\pi]$ with $\|h\|_\infty = 1$, let $r \in (0,1)$, and let $G_{r,h}$ be defined by \eqref{grh}.    Then \eqref{B09} shows that
\[
\Phi(G_{r,h}) = \int_{-\pi}^\pi h(\theta) \Phi(\rho_{re^{-i\theta}}^2) \,d\theta = \int_{-\pi}^\pi h(\theta) (I - re^{-i\theta} T)^{-2} \,d\theta.
\]
Using \eqref{B11} we obtain, for $x \in X$ and $x^* \in X^*$ with $\|x\|=\|x^*\|=1$,
$$\begin{aligned}
(1-r) \left| \int_{-\pi}^\pi h(\theta)  \langle (I-re^{-i\theta}T)^{-2}x, x^* \rangle \,d\theta \right|
&= (1-r) \left| \langle \Phi(G_{r,h})x, x^* \rangle \right| \\ 
& \hspace{-20pt} \le (1-r) \|\Phi\| \, \|G_{r,h}\|_\B \le 6\pi \|\Phi\|.
\end{aligned}$$
It follows that
\[
(1-r)  \int_{-\pi}^\pi  \big| \langle  (I - re^{i\theta} T)^{-2}x, x^*  \rangle \big| \,d\theta  \le 6\pi  \|\Phi\|.
\]
This establishes \eqref{dgsf2}, and then \eqref{dgsf} follows from Proposition \ref{vdgsf}.
\end{proof}

The following result can be extracted from the results and proofs in \cite[Section 2]{CG08}, but it is not explicitly stated there.   It is analogous to \cite[Theorem~6.5]{BGT2}. 

\begin{prop} \label{isom}
Let $T$ be an invertible operator on a Banach space, such that $\sup_{n\in\Z} \|T^n\| < \infty$,  and assume that $T$ satisfies the {\rm(dGSF)} condition.   Then $T$ is polynomially bounded.
\end{prop}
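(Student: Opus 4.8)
The plan is to reduce to a uniform bound on dilations and then to exploit the resolvent of $T$ on both sides of $\T$. First I would pass to an equivalent norm so that $M:=\sup_{n\in\Z}\|T^n\|<\infty$ with $\|T^n\|\le M$ for all $n\in\Z$. Since both $T$ and $T^{-1}$ are then power-bounded, their spectral radii are at most $1$, so $\sigma(T)\subseteq\T$ and $T$ is invertible with $\|(\l I-T)^{-1}\|\le M/\big||\l|-1\big|$ for $\l\notin\T$. By Theorem~\ref{dbfc} the {\rm(dGSF)} condition provides a bounded $\B$-calculus for $T$, equivalently the outer quadratic resolvent estimate \eqref{dgsf2}.

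To prove polynomial boundedness it suffices to bound $\|p(T)\|\le C\|p\|_\infty$ for polynomials $p$, and the key elementary device is dilation. Writing $p_\rho(z):=p(\rho z)$ for $\rho\in(0,1)$, we have $p_\rho\in W^+(\D)$ and $p(T)=\lim_{\rho\to1-}p_\rho(T)$ in operator norm by Corollary~\ref{frt}, so it is enough to bound $\|p_\rho(T)\|$ uniformly in $\rho$. The point is that $p_\rho$ extends holomorphically to the larger disc $\{|z|<1/\rho\}$, on which $\sup_{|z|\le1/\rho}|p_\rho|=\|p\|_\infty$; thus I may use resolvents of $T$ at points on \emph{both} sides of $\T$ while keeping the relevant sup-norm equal to $\|p\|_\infty$.

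Next I would convert the defining formula \eqref{dBfc} for $p_\rho(T)$ into a derivative-free representation. Integrating \eqref{dBfc} by parts via Green's theorem, and using that the kernel $z\mapsto\langle T(I-\overline{z}T)^{-2}x,x^*\rangle$ is anti-holomorphic while the weight $\log(1/|z|)$ vanishes on $\T$, one obtains
\[
\langle p_\rho(T)x,x^*\rangle=p_\rho(0)\langle x,x^*\rangle+\frac1\pi\int_\D\frac{p_\rho(z)}{z}\,\langle T(I-\overline{z}T)^{-2}x,x^*\rangle\,dA(z),
\]
in which $p_\rho$ now appears undifferentiated and bounded by $\|p\|_\infty$. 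The obstruction is that the inner $\theta$-integral of the kernel is only $O((1-r)^{-1})$ as $r\to1-$ by {\rm(dGSF)}, so a naive modulus estimate of this integral diverges logarithmically — this is precisely the logarithm in the bound $\|p\|_\B\le C\log(\deg p+1)\|p\|_\infty$. To remove it I would run the same computation over $\{|z|<1/\rho\}$, where $p_\rho$ is holomorphic, so that the singular circle $|z|=1$ becomes interior, and extract the cancellation afforded by the holomorphy of $p_\rho$ across $\T$. This requires controlling the resolvent on the inner side as well, namely
\[
\sup_{0<r<1}(1-r)\int_{-\pi}^\pi\big|\langle(I-re^{i\theta}T^{-1})^{-2}x,x^*\rangle\big|\,d\theta<\infty,
\]
which, via the identity $(re^{i\theta}I-T)^{-2}=T^{-2}(I-re^{i\theta}T^{-1})^{-2}$, is exactly the statement that $T^{-1}$ also satisfies {\rm(dGSF)}.

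The step I expect to be the main obstacle is establishing this inner estimate and then executing the cancellation. Two-sided power-boundedness by itself yields only the first-order bound $\|(\l I-T)^{-1}\|\le M/\big||\l|-1\big|$ on the inner side, whose square is not integrable against the weight $1-r$, so the inner quadratic estimate does not come for free; at the level of the scalar functions $g_{x,x^*}$, the outer estimate constrains only the analytic part of the bounded two-sided sequence $n\mapsto\langle T^nx,x^*\rangle$, and genuine operator-theoretic input — the rigidity of operator orbits together with the bounded group structure — is needed to control its co-analytic part, as in \cite[Section~2]{CG08}. Once both quadratic estimates are available, a duality argument based on the $\E(\D)$--$\B(\D)$ pairing \eqref{pdc}, applied on both sides of $\T$, captures the cancellation and yields $\|p_\rho(T)\|\le C\|p\|_\infty$ uniformly in $\rho$; letting $\rho\to1-$ then completes the proof.
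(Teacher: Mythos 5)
Your preliminary reductions are sound: passing to an equivalent norm, reducing to a uniform bound on $\|p_\rho(T)\|$, and the Green's theorem identity removing the derivative from $p_\rho$ (the boundary term vanishes because $\log(1/|z|)$ vanishes on $\T$) are all correct, and your diagnosis of the logarithmic loss in the naive estimate is exactly right. The problem is that the two steps carrying the entire weight of the proposition are never executed. The first is the inner quadratic estimate, which, as you observe, is precisely the (dGSF) condition for $T^{-1}$. You call it the main obstacle and then dispose of it with ``as in \cite[Section~2]{CG08}'', but it is not an off-the-shelf ingredient there: under the hypothesis $\sup_{n\in\Z}\|T^n\|<\infty$, the implication ``(dGSF) for $T$ implies (dGSF) for $T^{-1}$'' is essentially of the same strength as the proposition itself. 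Indeed, by Theorems \ref{dbfc} and \ref{cfbd}, (dGSF) for $T^{-1}$ amounts to a bounded $\B$-calculus for $T^{-1}$; it follows \emph{from the conclusion} (polynomial boundedness plus the group bound give a $C(\T)$-calculus, hence polynomial boundedness of $T^{-1}$, hence a $\B$-calculus for $T^{-1}$), but no route from the \emph{hypotheses} to it is offered short of proving the proposition. What \cite{CG08} actually establishes is the equivalence of the one-sided quadratic resolvent condition with the $C(\T)$-calculus bound, and the argument there (Lemma 2.3) passes directly from the one-sided condition to the calculus bound rather than first manufacturing the two-sided estimate; so your proposed architecture cannot simply be read off from that source, and deriving the inner estimate from the calculus bound instead would make the argument circular.

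Second, even granting both quadratic estimates, your final claim --- that a duality argument via the pairing \eqref{pdc} on both sides of $\T$ ``captures the cancellation'' and yields $\|p_\rho(T)\|\le C\|p\|_\infty$ --- is an assertion sitting exactly where the theorem's real content should be. Treating the analytic and co-analytic contributions separately necessarily loses a factor $\log(\deg p+1)$: the bound $\|p\|_\B\le C\log(n+1)\|p\|_\infty$ of Example \ref{res}(a) is sharp, and integrating the quadratic resolvent estimate down to a first-order one produces a factor $\log\frac{1}{1-r}$, so no modulus estimate of either side alone can succeed; the cancellation mechanism \emph{is} the difficulty, and you never exhibit it. By contrast, the paper's proof makes no attempt at a self-contained two-sided argument: via Proposition \ref{vdgsf} it converts (dGSF) into condition (2.5) of \cite[Theorem~2.2]{CG08}, and then follows the proof of \cite[Lemma~2.3]{CG08} to conclude that $T$ has a bounded $C(\T)$-calculus and is therefore polynomially bounded. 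In short, you have correctly located both the difficulty and the right reference, but the two steps you leave open are the proposition.
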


\begin{proof}
By Proposition~\ref{vdgsf}, $T$ satisfies the condition \eqref{dgsf3}, which is the same condition as \cite[(2.5)]{CG08}. Hence $T$ satisfies condition~(2) in \cite[Theorem~2.2]{CG08}.  By following the proof in \cite[Lemma 2.3]{CG08} that condition~(3) in \cite[Theorem~2.2]{CG08} implies (2.6) and then (2.1) in \cite{CG08}, we establish that $T$ has a bounded $C(\T)$-calculus, and in particular $T$ is polynomially bounded.
\end{proof}

Since the assumptions in Proposition \ref{isom} include $\sup_{n\in\Z} \|T^n\| < \infty$, it is easy to see that polynomial boundedness of $T$ is equivalent to having a bounded $C(\T)$-calculus;  it is a simple special case of an argument in \cite[p.\,122]{CG08}.   Moreover, the same assumption removes the need to use \cite[Lemma~1.1(a)]{vC83} in the proof of \cite[Lemma 2.3]{CG08}. 

  In combination with Theorem \ref{cfbd}, Proposition \ref{isom} shows that 
   an invertible isometry which is not polynomially bounded does not have a bounded $\B$-calculus.   Such operators exist (see Example \ref{iinpb}), so this confirms that $W^+(\D)$ is properly included in $\B(\D)$.   An abstract existence result was proved in \cite[Theorem 4.1]{Zar05}, where it is shown that,  given a closed subset $E$ of $\T$, every isometry with spectrum contained in $E$ is polynomially bounded if and only if $E$ is a Helson set and satisfies spectral synthesis for~$W(\T)$. 
  
We now give a simple explicit example of an invertible isometry which is not polynomially bounded.   It is closely related to the example in \cite[Remark~2.14(2)]{Arn21}, but we present it differently.

\begin{exa} \label{iinpb}
Let $T$ be the bilateral shift on $W(\T)$ defined by
\[
(Tf)(z) = z f(z), \qquad f \in W(\T), \, z \in \T.
\]
Then $T$ is an invertible isometry on $W(\T)$.  Suppose that $T$ is polynomially bounded on $W(\T)$.  Since $p = p(T)1$ for each trigonometic polynomial $p$,
\[
\|p\|_\infty \le \|p\|_W = \|p(T)1\|_W \le C\|p\|_\infty,
\]
for some constant $C$.  However the norms $\|\cdot\|_W$ and $\|\cdot\|_\infty$ are not equivalent.  Thus $T$ is not polynomially bounded.
\end{exa}

The $\B$-calculus has standard spectral properties, as follows.

\begin{thm}  \label{SIT}
Let $T$ be a power-bounded operator on a Banach space $X$, satisfying the {\rm(dGSF)} condition. Let $f\in\B(\D)$.  Then $\sigma(f(T))=f(\sigma(T))$. 
Moreover the following statements hold for any $\l\in\C\colon$
\begin{enumerate} [\rm(a)]
\item  If $x \in X$ and $Tx = \l x$, then $f(T)x = f(\l)x$.
\item  If $x^* \in X^*$ and $T^*x^* = \l x^*$, then $f(T)^*x^* = f(\l)x^*$.
\item  If $(x_n)_{n\ge1}$ is a sequence of unit vectors in $X$ and $\lim_{n\to\infty} \|Tx_n - \l x_n\| = 0$, then $\lim_{n\to\infty} \|f(T)x_n - f(\l)x_n\| = 0$.
\end{enumerate}
\end{thm}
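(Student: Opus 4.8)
The plan is to prove the pointwise statements (a)--(c) first, to deduce the inclusion $f(\sigma(T))\subseteq\sigma(f(T))$ from them, and finally to establish the reverse inclusion $\sigma(f(T))\subseteq f(\sigma(T))$ by approximation and the holomorphic functional calculus. The common tool for (a)--(c) is the density of the polynomials in $\B(\D)$ (Proposition~\ref{prp:B_prop}(f)) together with the boundedness of the $\B$-calculus (Theorem~\ref{dbfc}) and the inequality $\|\cdot\|_\infty\le\|\cdot\|_\B$ from Proposition~\ref{prp:B_prop}(b). First I would note that in each case the hypothesis forces $\l\in\overline\D$, since $\sigma(T)\subseteq\overline\D$ as $T$ is power-bounded, so that $f(\l)$ is well defined. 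For a polynomial $p$ the identities $p(T)x=p(\l)x$, $p(T)^*x^*=p(\l)x^*$ and $\|p(T)x_n-p(\l)x_n\|\to0$ are immediate (for the last, factor $p(z)-p(\l)=(z-\l)q(z)$ and use $p(T)-p(\l)I=q(T)(T-\l I)$). Choosing polynomials $p_k\to f$ in $\B(\D)$, one has $p_k(T)\to f(T)$ and $p_k(T)^*\to f(T)^*$ in operator norm and $p_k(\l)\to f(\l)$, and passing to the limit yields (a), (b); in (c) a routine three-term estimate bounding $\|f(T)x_n-p_k(T)x_n\|$, $\|p_k(T)x_n-p_k(\l)x_n\|$ and $|p_k(\l)-f(\l)|$ handles the limit.

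Next I would deduce $f(\sigma(T))\subseteq\sigma(f(T))$ from (b) and (c). Every $\mu\in\sigma(T)$ lies either in the approximate point spectrum $\sigma_{\mathrm{ap}}(T)$, or is an eigenvalue of the adjoint, $\mu\in\sigma_{\mathrm p}(T^*)$ (the only remaining possibility, when $T-\mu I$ is bounded below but not surjective, produces a nonzero functional annihilating its range). In the first case part (c) gives unit vectors with $\|f(T)x_n-f(\mu)x_n\|\to0$, so $f(\mu)\in\sigma(f(T))$; in the second case part (b) gives $f(T)^*x^*=f(\mu)x^*$, so $f(\mu)\in\sigma(f(T)^*)=\sigma(f(T))$.

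For the reverse inclusion, fix $\nu\notin f(\sigma(T))$; I must show $f(T)-\nu I$ is invertible. Writing $f_s(z):=f(sz)$ for $s\in(0,1)$, each $f_s$ is holomorphic on $\{|z|<1/s\}$, hence on a neighbourhood of $\sigma(T)$, and $f_s\to f$ in $\B(\D)$ by Proposition~\ref{prp:B_prop}(e); moreover the $\B$-calculus value $f_s(T)$ agrees with its Riesz--Dunford value, since on $W^+(\D)$ both equal the norm-convergent series $\sum_n a_ns^nT^n$ (Corollary~\ref{frt}(a)). Using uniform continuity of $f$ on $\overline\D$ (Proposition~\ref{prp:B_prop}(a)) and $\nu\notin f(\sigma(T))$, I would produce $\delta>0$ and a cycle $\Gamma$ around $\sigma(T)$, both independent of $s$, with $|f_s(z)-\nu|\ge\delta$ on and inside a fixed neighbourhood of $\sigma(T)$ containing $\Gamma$, for all $s$ near $1$. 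Then $g_s:=(f_s-\nu)^{-1}$ is holomorphic near $\sigma(T)$, the product rule of the holomorphic calculus gives $g_s(T)(f_s(T)-\nu I)=I$, and the contour formula yields the \emph{uniform} bound
\[
\|(f_s(T)-\nu I)^{-1}\|=\|g_s(T)\|\le\frac{1}{2\pi}\,\mathrm{len}(\Gamma)\,\frac1\delta\,\sup_{z\in\Gamma}\|(zI-T)^{-1}\|=:M .
\]
Since $f_s(T)\to f(T)$ in operator norm, a Neumann-series perturbation argument (valid once $M\|f(T)-f_s(T)\|<1$) then shows that $f(T)-\nu I$ is invertible, so $\nu\notin\sigma(f(T))$.

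I expect this last step to be the main obstacle, the delicate point being the \emph{uniformity} in $s$ of the resolvent bound for $f_s(T)-\nu I$. Upper semicontinuity of the spectrum only controls $\sigma(f_s(T))$ in terms of $\sigma(f(T))$, which is the wrong direction, so the bound must instead be extracted from the holomorphic functional calculus; this in turn requires keeping $f_s-\nu$ bounded away from $0$ on a fixed contour, which is where the behaviour of $f$ near the part of $\sigma(T)$ lying on $\T$ must be handled with care.
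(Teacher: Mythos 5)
Your proofs of (a)--(c) by polynomial approximation, and the deduction of $f(\sigma(T))\subseteq\sigma(f(T))$ from (b) and (c), are correct; they differ from the paper, which obtains (a) and (b) directly from the defining formula \eqref{dBfc} together with Proposition \ref{repf}, and gets (c) by passing to the quotient space $\ell^\infty(X)/c_0(X)$, but your route is equally valid. The genuine gap is in the reverse inclusion $\sigma(f(T))\subseteq f(\sigma(T))$, at precisely the step you flag. First, no cycle $\Gamma$ independent of $s$ can exist unless $\sigma(T)\cap\T=\emptyset$ (in which case $r(T)<1$ and the whole theorem is the classical Riesz--Dunford spectral mapping theorem). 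Indeed, $g_s$ is holomorphic only inside $\{|z|<1/s\}$, so a fixed admissible $\Gamma$ would have to lie in $\{|z|<1/s\}$ for all $s$ near $1$, hence in $\overline\D$; but a cycle contained in $\overline\D$ has winding number $0$ about every $\mu\in\sigma(T)\cap\T$, because the ray $\{t\mu:t\ge1\}$ joins $\mu$ to infinity without meeting the cycle. Second, if you let $\Gamma=\Gamma_s$ depend on $s$, your resolvent bound degenerates: a cycle in $\{|z|<1/s\}$ with winding number $1$ about $\mu\in\sigma(T)\cap\T$ must cross the segment $[\mu,\mu/s]$ (the winding number about $\mu/s$ is $0$, since the ray $\{t\mu:t\ge1/s\}$ avoids $\Gamma_s$), say at $z_0$; then $\|(z_0I-T)^{-1}\|\ge\mathrm{dist}(z_0,\sigma(T))^{-1}\ge|z_0-\mu|^{-1}>s/(1-s)$, so your constant $M=M_s$ is at least of order $(1-s)^{-1}$. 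Since the hypotheses give no rate for $\|f-f_s\|_\B\to0$, the Neumann-series condition $M_s\|f(T)-f_s(T)\|<1$ cannot be guaranteed for any $s$. Nor does replacing $\mathrm{len}(\Gamma_s)\sup_{\Gamma_s}\|(zI-T)^{-1}\|$ by the finer bound $\frac{1}{2\pi}\int_{\Gamma_s}|g_s(z)|\,\|(zI-T)^{-1}\|\,|dz|$ rescue the argument in general: if $\T\subseteq\sigma(T)$ (e.g.\ $T$ unitary with $\sigma(T)=\T$, where the theorem nevertheless holds), every admissible cycle must cross every ray $\{te^{i\phi}:t\ge1\}$ inside the annulus $\{1<|z|<1/s\}$, where $\|(zI-T)^{-1}\|>s/(1-s)$, and the portion of the cycle lying in that annulus has length at least $2\pi$, so the integral itself exceeds $2\pi s/(1-s)$.

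This failure is structural: any contour available to $f_s$ is squeezed against $\sigma(T)\cap\T$ as $s\to1-$, exactly where the resolvent of $T$ must blow up. The paper therefore proves the spectral mapping identity by Gelfand theory rather than by contour estimates. It fixes a closed commutative subalgebra $\A$ of $L(X)$ containing $I$, $T$, all resolvents of $T$ \emph{and all resolvents of $f(T)$}, so that the spectra of $T$ and $f(T)$ relative to $\A$ coincide with their spectra in $L(X)$. For a character $\chi$ of $\A$ with $\l=\chi(T)$, applying $\chi$ inside the absolutely convergent operator-valued integral \eqref{farr} representing $f_r(T)=f(rT)$ and using multiplicativity, $\chi\big((I-r\overline{z}T)^{-2}\big)=(1-r\overline{z}\l)^{-2}$, the reproducing formula of Proposition \ref{repf} gives $\chi(f_r(T))=f(r\l)$; letting $r\to1-$ yields $\chi(f(T))=f(\chi(T))$. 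Since the spectrum of $f(T)$ in $\A$ is exactly the set of values $\chi(f(T))$ as $\chi$ ranges over the characters, both inclusions follow at once, with no resolvent estimates needed. This character computation is the ingredient your proof is missing; your parts (a)--(c) and the forward inclusion can be retained as they stand.
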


\begin{proof}
We begin by proving the spectral mapping property. We use the Banach algebra method in a similar way to \cite[Theorem~4.17]{BGT}.   Let $f\in\B(\D)$, with power series $f(z)=\sum_{n=0}^\infty a_nz^n$, and let $\A$ be a closed commutative subalgebra of $L(X)$ containing $I$, $T$, $(\l I-T)^{-1}$ for all $\l\in\rho(T)$ and $(\l I-f(T))^{-1}$ for all $\l \in \rho(f(T))$.  Then the spectra of $T$ and  $f(T)$ in $\mathcal{A}$ coincide with their spectra in $L(X)$.
Let  $\chi$ be any character of $\mathcal{A}$ and let $\l=\chi(T)$. Then $|\l|\le1$. Let $r \in (0,1)$ and $f_r(z):=f(rz)$ for $z\in\D$. Then $f_r\in W^+(\D)$ and hence $f_r(T)=\sum_{n=0}^\infty a_n r^n T^n = f(rT) $ by Corollary~\ref{frt}(b).   It follows that \begin{equation} \label{farr}
f_r(T) = f(0) + \frac{2}{\pi} \int_\D  \log \frac{1}{|z|}\,f'(z) rT (I-r\overline{z}T)^{-2} \, dA(z),
\end{equation}
where the integral is absolutely convergent in the operator norm. Note that $\chi((I-r\overline{z}T)^{-1}) = (1-r\overline{z}\l)^{-1}$ whenever $z \in \D$. 
 Hence, applying $\chi$ to~\eqref{farr} and using Proposition~\ref{repf}, we obtain
\[
\begin{aligned}
\chi(f_r(T)) 
&=  f(0) + \frac{2}{\pi} \int_\D  \log \frac{1}{|z|}\frac{f'(z) r\l }{(1-r\overline{z}\l)^{2}} \, dA(z)\\
& =f(0) + \frac{2}{\pi} \langle \rho_{r\l},f \rangle_{\B} =f(r\l).
\end{aligned}
\]
By Proposition~\ref{prp:B_prop}(e) and boundedness of the $\B$-calculus, $\lim_{r \to 1-} f_r(T) = f(T)$ in operator norm, so
\[
f(\chi(T))=f(\l) = \lim_{r \to 1-} f(r\l) = \lim_{r \to 1-} \chi(f_r(T)) = \chi(f(T)).
\]
Since $\chi$ was an arbitrary character, we conclude that $\sigma(f(T))=f(\sigma(T))$.

We now prove the remaining statements.

\noindent (a)  From (\ref{dBfc}) and Proposition \ref{repf}, for all $x^* \in X^*$, we have
\begin{align*}
\langle f(T)x,x^* \rangle &= f(0) \langle x,x^* \rangle +\frac{2}{\pi} \int_\D \log \frac{1}{|z|}  \frac{f'(z)\l}{(1 - \overline{z}\l)^2}  \langle x,x^* \rangle \,dA(z) \\
&= \Big(f(0)  + \frac{2}{\pi} \langle \rho_{\l},f \rangle_{\B} \Big)\langle x,x^* \rangle=f(\l) \langle x,x^* \rangle.
\end{align*}

\noindent (b)  This is similar to (a).

\noindent (c)  Let $Y = \ell^\infty(X)/c_0(X)$, where $\ell^\infty(X)$ is the Banach space of all bounded sequences $(x_n)_{n\ge1}$ in $X$, with the $\ell^\infty$-norm, and $c_0(X)$ is the subspace of sequences such that $\lim_{n\to\infty} \|x_n\|=0$.  By Theorem~\ref{dbfc}, the operator $T$ has a bounded $\B$-calculus on $X$ and it induces an operator $T_Y$ on $Y$ with a  bounded $\B$-calculus.  

Suppose that $\|x_n\|=1$ for all $n$ and $\lim_{n\to\infty} \|Tx_n-\l x_n\|=0$.  Let $\mathbf{x} = (x_n)_{n\ge1}$, and let $y = \pi(\mathbf{x}) \in Y$, where $\pi : \ell^\infty(X) \to Y$ is the quotient map.   Then $T_Y(y) = \l y$.  By applying (a) to $T_Y$, we see that $f(T_Y)y = \l y$, and this implies (c).
\end{proof}

If $T$ is a Ritt operator and $f\in\B(\D)$, there is a slightly simpler proof of the spectral mapping property $\sigma(f(T))=f(\sigma(T))$. Indeed, in this case we may apply any character $\chi$ of the algebra $\A$ considered in the above proof directly to the operator-valued formula~\eqref{ritt}, with no need for an approximation argument.

\section{Proof of Theorem \ref{KTB}}  \label{KT}

In this section, we prove Theorem \ref{KTB}.   For convenience, we repeat the statement here.  

\begin{thm} \label{KTB2}
Let $T$ be a power-bounded operator on a Banach space $X$, and assume that $T$ has a bounded $\B$-calculus.   Let $f \in \B(\D)$, and assume that $f$ vanishes on $\sigma_u(T)$.   Then $\lim_{n\to\infty} \|T^nf(T)\| = 0$.
\end{thm}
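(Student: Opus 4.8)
The plan is to settle the case of an invertible isometry directly, where the tools at hand give the conclusion cheaply, and then to reduce the general power-bounded case to it. First I would record the standing facts. Since $T$ has a bounded $\B$-calculus, Theorem~\ref{cfbd} tells us that $T$ is power-bounded and satisfies the (dGSF) condition; write $M:=\sup_{n\ge0}\|T^n\|$. I would also note the elementary ``almost-decreasing'' estimate: for $n\ge m$,
\[
\|T^nf(T)\|=\|T^{n-m}\,T^mf(T)\|\le M\,\|T^mf(T)\|,
\]
so that $\limsup_{n}\|T^nf(T)\|\le M\inf_{m}\|T^mf(T)\|$. Consequently it suffices to show that $\inf_n\|T^nf(T)\|=0$; equivalently, it is enough to rule out a uniform lower bound $\|T^nf(T)\|\ge c>0$ holding for all $n$.

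The invertible-isometry case is then immediate and is where Proposition~\ref{isom} and Theorem~\ref{KvNthm} do their work: if $V$ is an invertible isometry satisfying (dGSF), then $\sup_{n\in\Z}\|V^n\|<\infty$, so by Proposition~\ref{isom} $V$ is polynomially bounded; since $\B(\D)\subseteq A(\D)$ and $f$ vanishes on $\sigma_u(V)=\sigma(V)$, Theorem~\ref{KvNthm} gives $\|V^nf(V)\|\to0$, and because $V$ is an invertible isometry this already forces $f(V)=0$. To exploit this for a general $T$, I would pass to the unitary asymptote: using a Banach limit, set $q(x):=\operatorname{LIM}_m\|T^mx\|$, a seminorm with $q\le M\|\cdot\|$ and $q(Tx)=q(x)$, and let $X_\infty$ be the completion of $X/\{q=0\}$, with quotient map $J:X\to X_\infty$ of dense range and induced isometry $V$ satisfying $VJ=JT$. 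I would argue that the intertwining $VJ=JT$ and the density of $JX$ transport the bound \eqref{dgsf} from $T$ to $V$, that (dGSF) forces $V$ to be an invertible isometry, and that $\sigma(V)\subseteq\sigma_u(T)$. Granting this, the previous paragraph applies to $V$, so $f(V)=0$; compatibility of the calculi ($Jf(T)=f(V)J$) together with shift-invariance of the Banach limit then gives $\operatorname{LIM}_m\|T^mf(T)x\|=q(f(T)x)=0$ for every $x$, and the per-vector form of the almost-decreasing estimate upgrades this to $\lim_m\|T^mf(T)x\|=0$. Thus $T^mf(T)\to0$ in the strong operator topology.

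The main obstacle is the final passage from this \emph{strong} convergence to convergence in \emph{operator norm}, i.e.\ the uniformity that yields $\inf_n\|T^nf(T)\|=0$; the asymptote above is blind to it, since for each fixed $x$ the orbit already decays. I would handle this by contradiction, using the approximate-eigenvalue stability in Theorem~\ref{SIT}(c). Suppose $\|T^{n_k}f(T)x_k\|\ge c>0$ for unit vectors $x_k$ and some $n_k\to\infty$. The almost-decreasing estimate keeps the whole backward orbit $\|T^{j}f(T)x_k\|\ge c/M$ for $0\le j\le n_k$, so a limit-operator (ultrapower) construction over $k$ produces a nonzero element $\eta$ lying in the range of $f$ of the lifted operator and possessing a two-sided bounded orbit. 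Such an $\eta$ has peripheral local spectrum and hence yields a unimodular approximate eigenvalue $\lambda\in\sigma_u(T)$; feeding the corresponding approximate eigenvectors into Theorem~\ref{SIT}(c) would force $|f(\lambda)|\ge c/M>0$, contradicting the hypothesis that $f$ vanishes on $\sigma_u(T)$. This localisation step, manufacturing a genuine approximate eigenvalue in $\sigma_u(T)$ out of the merely transient non-decay at the scales $n_k$, is the crux; the strong convergence secured earlier is precisely what guarantees the obstruction cannot be absorbed by the asymptote and must surface on $\sigma_u(T)$ itself.
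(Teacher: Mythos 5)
Your overall architecture is the one the paper uses (pass to the isometric asymptote, then Proposition~\ref{isom} plus Theorem~\ref{KvNthm} to force $f(V)=0$, then upgrade strong convergence to norm convergence), but three of your bridging claims do not hold as stated. First, you cannot transport \eqref{dgsf} from $T$ to $V$ ``by intertwining and density'': the map $J$ is not bounded below (it may even have a large kernel, and $\|Jx\|=q(x)$ can be far smaller than the norm of any representative), so for $y=Jx$ the bound you inherit is $\gamma_T\|x\|\,\|J^*y^*\|$, which is not of the form $C\|y\|\,\|y^*\|$; pointwise finiteness on a dense subspace with uncontrolled constants neither passes to the completion nor triggers the closed-graph uniformity. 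The correct route, and the paper's, is to transport the \emph{calculus}: since $q(Sx)\le\|S\|\,q(x)$ for every $S$ commuting with $T$, the map $g\mapsto g(T)_Z$ is a bounded homomorphism of $\B(\D)$ sending $z$ to $V$, and then Theorem~\ref{cfbd} gives (dGSF) for $V$. Second, the claim that ``(dGSF) forces $V$ to be an invertible isometry'' is false: the unilateral shift on $\ell^2$ is a non-invertible isometry satisfying (dGSF) (see Example~\ref{PelVit}(a)), and it is its own isometric asymptote. Invertibility requires the case split you omit: if $\T\subseteq\sigma(T)$ then $f\equiv0$ by holomorphy and there is nothing to prove; otherwise $\sigma(V)\subseteq\sigma(T)$ omits a point of $\T$, and since an isometry has no approximate eigenvalues in $\D$ this forces $\sigma(V)\subseteq\T$, whence invertibility and $\sigma(V)\subseteq\sigma_u(T)$.

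The decisive gap is your strong-to-uniform upgrade. Producing $\eta\ne0$ with a two-sided bounded orbit in an ultrapower is fine, but the chain ``peripheral local spectrum $\Rightarrow$ unimodular approximate eigenvalue $\lambda$ $\Rightarrow$ Theorem~\ref{SIT}(c) forces $|f(\lambda)|\ge c/M$'' has no mechanism behind its last arrow: Theorem~\ref{SIT}(c) only yields $\|f(T)y_n-f(\lambda)y_n\|\to0$, so to conclude $|f(\lambda)|\ge c/M$ you would need approximate eigenvectors satisfying $\liminf_n\|f(T)y_n\|\ge c/M$, and nothing in your construction ties the eigenvectors attached to $\lambda$ to such a lower bound ($\eta$ lying in the closure of the range of $f(T_{\mathcal U})$ gives no lower bound for $f(T_{\mathcal U})$ on them; even the non-emptiness of the local spectrum needs SVEP-type care). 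Moreover the detour is unnecessary: the strong convergence you have already established applies verbatim to \emph{any} operator inheriting the bounded $\B$-calculus whose unitary spectrum lies in the zero set of $f$. The coordinatewise operator $T_\infty$ on $\ell^\infty(X)$ (equivalently, your ultrapower lift) is such an operator, since $g\mapsto g(T)_\infty$ is a bounded $\B$-calculus for it and $\sigma(T_\infty)=\sigma(T)$. Applying strong convergence to $T_\infty$ at the single vector $\mathbf{x}=(x_k)_{k\ge1}$ contradicts $\|T_\infty^{n_k}f(T_\infty)\mathbf{x}\|_\infty\ge\|T^{n_k}f(T)x_k\|>c$; in your language, $\|T_{\mathcal U}^{\,j}f(T_{\mathcal U})\xi\|\ge c/M$ for all $j$, where $\xi=[(x_k)]$, by your own backward almost-decreasing estimate, contradicting strong convergence for $T_{\mathcal U}$ at $\xi$. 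This is exactly how the paper closes the argument, with no local spectral theory at all.
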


\begin{proof}
If $\T \subseteq \sigma(T)$, then the assumption on $f$ implies that $f$ vanishes on $\T$.  Then, by holomorphy, $f$ vanishes on $\D$, and $f(T)=0$.

Now assume that $\T$ is not contained in $\sigma(T)$.   We will first show that $\lim_{n\to\infty} \|T^nf(T)x\| = 0$ for all $x \in X$.  We start the argument by adopting a construction which has been used in several papers on the Katznelson--Tzafriri theorem and related topics, such as \cite{ESZ1}, \cite{KvN97}, \cite{Lek09};  see  \cite[Proposition 2.1]{Vu97} in particular.  By considering the seminorm
\[
q(x) := \limsup_{n\to\infty} \|T^nx\|, \qquad x \in X,
\]
taking the quotient space $X/\operatorname{Ker} q$ with the norm induced by $q$, and then taking the completion, one arrives at a Banach space $Z$, a bounded map $\pi : X \to Z$ with dense range such that $\|\pi(x)\|_Z = q(x)$ for $x \in X$, and an isometry  $V$ on $Z$ induced by $T$ so that $V\pi = \pi T$ and $\sigma(V) \subseteq \sigma(T)$.  Now $V$ has no approximate eigenvalues in $\D$ and $\sigma(V)$ is not equal to $\overline{\D}$, so $\sigma(V) \subseteq \T$.  Thus  $V$ is invertible and $\sigma(V) \subseteq \sigma_u(T)$.  In addition any operator $S \in L(X)$ which commutes with $T$ induces an operator $S_Z \in L(Z)$, and the map $S \mapsto S_Z$ is a contractive algebra homomorphism with $T_Z = V$. 

Let $g \in \B(\D)$.   Then $g(T)$ commutes with $T$, so there is an induced operator $g(T)_Z$ on $Z$.   The mapping $g \mapsto g(T)_Z$ is a bounded algebra homomorphism from $\B(\D)$ to $L(Z)$ mapping $1$ to $I$ and $z$ to $V$.   Thus $V$ has a bounded $\B$-calculus.   By Theorem \ref{cfbd}, $V$ satisfies the (dGSF) condition, so by Proposition~\ref{isom} the invertible isometry $V$ is polynomially bounded.   

Now we can apply Theorem \ref{KvNthm}  and we conclude that 
\[
\lim_{n\to \infty} \|V^nf(V)\| = 0.
\]
Since $V$ is an isometry this implies that $f(V)=0$ and hence $\pi f(T) = 0$.  In turn this implies that $\lim_{n\to\infty} \|T^nf(T)x\|=0$ for all $x \in X$.

Now we improve this result to show that $\lim_{n\to\infty} \|T^nf(T)\|=0$.  For this we use a minor variation of arguments previously used to strengthen the convergence in this way by passing from an operator on $X$ to an operator on the space $\ell^\infty(X)$.   
Suppose that the claim is not true.   Then there exist $\ep>0$ and a subsequence $(n_k)_{k\ge1}$ such that $\|T^{n_k}f(T)\| > \ep$ for all $k$.  Let $x_k \in X$ be such that $\|x_k\|=1$ and $\|T^{n_k}f(T)x_k\| > \ep$.   Now let $T_\infty$ be the operator on $\ell^\infty(X)$ defined by $T_\infty\mathbf{y} = (Ty_k)_{k\ge1}$, where $\mathbf{y} = (y_k)_{k\ge1}$, and let $\mathbf{x} = (x_k)_{k\ge1} \in \ell^\infty(X)$.   Then $T_\infty$ is power-bounded with a bounded $\B$-calculus, and $\sigma(T_\infty) = \sigma(T)$, so the conclusion of the previous paragraph implies that 
$\lim_{k\to\infty} \|T_\infty^{n_k}f(T_\infty)\mathbf{x}\|_\infty= 0.$
On the other hand,
\[
 \|T_\infty^{n_k}f(T_\infty)\mathbf{x}\|_\infty \ge \|T^{n_k}f(T)x_k\| > \ep,  \qquad k \ge1.
 \] 
 This contradiction completes the proof.
\end{proof}

\begin{rems}  \label{remend}
(a)  The condition that $f$ vanishes on $\sigma_u(T)$ is a necessary condition for the conclusion of Theorem~\ref{KTB} to hold.  Assume that  $f\in\B(\D)$ and $\lim_{n\to\infty} \|T^nf(T)\|=0$, and let $\l \in \sigma_u(T)$.  Then $\l^nf(\l) \in \sigma(T^nf(T))$ for all $n\in\Z_+$, by Theorem~\ref{SIT}.  Hence 
$|f(\l)| = |\l^n f(\l)| \le \|T^nf(T)\| \to 0$ as $n \to \infty,$  so $f$ vanishes on $\sigma_u(T)$.

\noindent
(b) Parts of the proof of Theorem \ref{KTB} above have some similarities with the proof of \cite[Theorem 2.10]{ESZ1}. This is not surprising as the contexts of the two theorems are quite similar, and this raises the possibility of finding a single result which covers both theorems.  Our proof of Theorem \ref{KTB} relies also on the (dGSF) condition being equivalent to the relevant operator having a bounded $\B$-calculus (which has its origins in \cite{BGT2}), and on results in \cite{KvN97} and \cite{CG08} in the last part of the proof.   None of these methods were available when the paper \cite{ESZ1} was published.

\noindent
(c)   In the final paragraph of the proof above, instead of using the operator $T_\infty$ on $\ell^\infty(X)$, we could have used the operator of left multiplication by $T$ on the Banach algebra $L(X)$.
The proof above shows that the (dGSF) condition for $T$ on $X$ implies the (dGSF) condition for both those operators.  It seems likely that this can be shown directly, without going through $\B$-calculus.

\noindent
(d)  A direct analogue of Theorem \ref{KTthm} for negative generators of $C_0$-semi\-groups was proved in  \cite[Th\'eor\`eme III.4]{ESZ2} and \cite[Theorem 3.2]{Vu92}, independently;  see also \cite[Theorem 5.7.4]{ABHN}.     We have recently proved an analogue of Theorem \ref{KTB} using broadly the same approach as in this paper, but the details were more involved and sufficiently different from the material presented in this paper to warrant a separate exposition; see \cite{BS22}.
\end{rems}

The main result in \cite[Theorem 1.2]{BS22} is formulated for some Banach algebras which are larger than $\B(\C_+)$. The corresponding extension of Theorem~\ref{KTB} is as follows.
 
\begin{thm} \label{KTB3}
Let $\mathcal{A}$ be a Banach algebra such that $\mathcal{B}(\mathbb{D})$ is continuously included in $\mathcal{A}$ and $\mathcal{A}$ is continuously included in $A(\mathbb{D)}$.   Let $T$ be a power-bounded operator on a Banach space, with a bounded $\mathcal{A}$-calculus.  Let $f \in \mathcal{A}$, and assume that $f$ vanishes on $\sigma_u(T)$.   Then $\lim_{n\to\infty} \|T^nf(T)\|=0$.
 \end{thm}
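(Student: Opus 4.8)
The plan is to follow the proof of Theorem~\ref{KTB2} almost verbatim, using the $\A$-calculus wherever the function $f$ itself is involved while still exploiting the $\B$-calculus (which $T$ automatically possesses, by restricting the given $\A$-calculus to $\B(\D)$) at the points where Theorems~\ref{cfbd} and~\ref{isom} are invoked. First I would dispose of the case $\T\subseteq\sigma(T)$: then $f$ vanishes on $\T$, so by holomorphy $f=0$ in $A(\D)$, and since $\A$ is continuously, hence injectively, included in $A(\D)$ we get $f=0$ in $\A$ and $f(T)=0$. Assuming $\T\not\subseteq\sigma(T)$, I would form the seminorm $q(x)=\limsup_{n\to\infty}\|T^nx\|$ and pass to the quotient-completion $Z$, obtaining the canonical map $\pi\colon X\to Z$ and the induced invertible isometry $V=T_Z$ with $V\pi=\pi T$ and $\sigma(V)\subseteq\sigma_u(T)\subseteq\T$, exactly as before.

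The second step is to endow $V$ with a polynomially bounded structure. Every $g(T)$ with $g\in\A$ commutes with $T$ and so induces an operator on $Z$, and $g\mapsto g(T)_Z$ is a bounded algebra homomorphism from $\A$ into $L(Z)$ sending $1$ to $I$ and $z$ to $V$; restricting it to $\B(\D)$ gives $V$ a bounded $\B$-calculus. By Theorem~\ref{cfbd} the isometry $V$ then satisfies the (dGSF) condition, and by Proposition~\ref{isom} it is polynomially bounded. Hence $V$ also carries a bounded $A(\D)$-calculus $\Psi$, and since $f\in\A\subseteq A(\D)$ vanishes on $\sigma_u(V)=\sigma(V)$, Theorem~\ref{KvNthm} yields $\lim_{n\to\infty}\|V^n\Psi(f)\|=0$; as $V$ is an isometry this forces $\Psi(f)=0$.

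The crux is then to identify $\Psi(f)$, the value of $f$ in the $A(\D)$-calculus of $V$, with the induced value $f(T)_Z$ coming from the $\A$-calculus, since only the latter enters the desired conclusion $\lim_n\|T^nf(T)\|=0$. Both the induced homomorphism $g\mapsto g(T)_Z$ and $\Psi|_\A$ are bounded algebra homomorphisms that agree on polynomials, hence on all of $\B(\D)$ by density of the polynomials there (Proposition~\ref{prp:B_prop}(f)); in particular they agree on each dilate $f_r(z):=f(rz)$, which lies in $W^+(\D)\subseteq\B(\D)$, so $\Psi(f_r)=f_r(T)_Z$. Since $f_r\to f$ uniformly on $\overline\D$ and $\Psi$ is bounded for $\|\cdot\|_\infty$, we get $f_r(T)_Z=\Psi(f_r)\to\Psi(f)=0$; if moreover $f_r\to f$ in $\A$, then $f_r(T)_Z\to f(T)_Z$, and therefore $f(T)_Z=0$. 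This gives $\pi(f(T)x)=0$, i.e.\ $\lim_{n\to\infty}\|T^nf(T)x\|=0$, for every $x\in X$. I expect this reconciliation of the two functional calculi on $V$ to be the main obstacle: unlike in Theorem~\ref{KTB2}, where $f\in\B(\D)$ and the matching is immediate from density of polynomials in $\B(\D)$, here $f$ ranges over the larger algebra $\A$, in which the polynomials need not be dense, so the dilation-continuity of the $\A$-calculus has to be invoked. This is where the finer structure of $\A$ enters, beyond the bare sandwich $\B(\D)\subseteq\A\subseteq A(\D)$; the required analogue of Proposition~\ref{prp:B_prop}(e) holds for the algebras considered in \cite{BS22}.

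Finally I would upgrade the pointwise statement to the operator-norm statement by the standard $\ell^\infty(X)$ argument used at the end of Theorem~\ref{KTB2}: if $\|T^{n_k}f(T)\|>\ep$ along some subsequence, I would choose unit vectors $x_k$ with $\|T^{n_k}f(T)x_k\|>\ep$, pass to the coordinatewise operator $T_\infty$ on $\ell^\infty(X)$, which is again power-bounded with a bounded $\A$-calculus and satisfies $\sigma(T_\infty)=\sigma(T)$, apply the pointwise conclusion to $\mathbf{x}=(x_k)_{k\ge1}$, and derive a contradiction.
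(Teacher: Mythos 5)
Your proposal follows exactly the route the paper intends: the paper's own ``proof'' of Theorem~\ref{KTB3} is a single sentence declaring it a minor variation of the proof of Theorem~\ref{KTB2}, and your steps --- the trivial case $\T\subseteq\sigma(T)$, the construction of $Z$, $\pi$ and the induced invertible isometry $V$, the use of Theorem~\ref{cfbd} and Proposition~\ref{isom} to make $V$ polynomially bounded, the application of Theorem~\ref{KvNthm} to get $\Psi(f)=0$, and the final $\ell^\infty(X)$ upgrade --- are all correct and are precisely that variation. The real content of your write-up is that you have isolated the one step which is \emph{not} a routine transcription: identifying $\Psi(f)$, the value of $f$ under the $A(\D)$-calculus of $V$, with $f(T)_Z$, the operator induced on $Z$ by the given $\A$-calculus of $T$. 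In the proof of Theorem~\ref{KTB2} this identification is automatic, because both maps restrict to bounded $\B$-calculi for $V$ and the $\B$-calculus is unique, the polynomials being dense in $\B(\D)$. For a general $\A$ satisfying only the sandwich hypothesis, that uniqueness mechanism is unavailable.

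This is where your proof, as written, stops short of the stated theorem: you close the step only under the additional hypothesis $\lim_{r\to1-}\|f_r-f\|_\A=0$ (or density of the polynomials in $\A$, or any other condition forcing the two bounded homomorphisms $g\mapsto g(T)_Z$ and $\Psi|_\A$, which agree on $\B(\D)$, to agree at $f$). Nothing in the hypotheses of Theorem~\ref{KTB3} implies this: the set on which the two homomorphisms coincide is a closed subalgebra of $\A$ containing $\B(\D)$, and there is no reason it should be all of $\A$; two bounded $\A$-calculi for the same operator need not coincide when the polynomials fail to be dense. So your argument proves the theorem for $f$ in the $\A$-closure of $\B(\D)$, equivalently under a dilation-continuity assumption on $\A$, rather than for arbitrary $f\in\A$. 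To be fair, this is not a misreading on your part: the paper's one-line proof passes over exactly the same point in silence, and, as you note, the continuous-parameter result in \cite{BS22} which Theorem~\ref{KTB3} is modelled on is formulated for algebras possessing precisely this kind of approximation property. The honest conclusion is that either the statement should carry such a hypothesis (your version), or one needs a genuinely different argument showing that \emph{every} bounded $\A$-calculus for the induced isometry $V$ annihilates each $f\in\A$ vanishing on $\sigma(V)$, without invoking uniqueness; neither your proposal nor the paper supplies that, so you should present the theorem with the extra hypothesis made explicit.
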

 
The proof of this is a minor variation of the proof of Theorem~\ref{KTB} given above.   Examples of Banach algebras satisfying the assumptions may be found in \cite{Pel82}.

\end{document}